\renewcommand{\eqref}{\cref}
\newcommand{\R}{\mathbb R}
\newcommand{\T}{\mathbb T}
\newcommand{\CC}{\mathcal C}
\newcommand{\HH}{\mathcal H }
\newcommand{\RR}{\mathcal R}
\newcommand{\id}{\mathrm{Id}} 
\newcommand{\la}{\langle}
\newcommand{\ra}{\rangle}
\renewcommand{\d}{\mathrm{d}}
\newcommand{\eps}{\varepsilon}
\newcommand{\Nt}{|\hskip-0.04cm|\hskip-0.04cm|}
\renewcommand{\Re}{R^\eps}
\newcommand{\Je}{J^\eps}
\numberwithin{equation}{section}
\setlist[enumerate]{label=\textnormal{(\arabic*)},itemsep=5pt,topsep=4pt,leftmargin=*}
\declaretheorem[name=Theorem,numberwithin=section]{theo}
\declaretheorem[name=Lemma,numberlike=theo]{lem}
\declaretheorem[name=Corollary,numberlike=theo]{cor}
\declaretheorem[name=Remark,numberlike=theo,style=remark]{rem}
\crefname{equation}{}{}
\crefname{enumi}{}{}
\crefname{theo}{Theorem}{Theorems}
\crefname{lem}{Lemma}{Lemmas}
\crefname{prop}{Proposition}{Propositions}
\crefname{cor}{Corollary}{Corollaries}
\crefname{defin}{Definition}{Definitions}
\crefname{rem}{Remark}{Remarks}
\def\ge{\geqslant}
\def\leq{\leqslant}
\def\geq{\geqslant}
\title[]{Fractional hypocoercivity in bounded domains in the anomalous diffusion limit}
\author[M. Herda]{Maxime Herda}
\address[M. Herda]{Univ. Lille, CNRS, Inria, UMR 8524 - Laboratoire Paul Painlevé, F-59000 Lille, France}
\email{maxime.herda@inria.fr}
\author[M. Pegon]{Marc Pegon}
\address[M. Pegon]{Univ. Lille, CNRS, Inria, UMR 8524 - Laboratoire Paul Painlevé, F-59000 Lille, France}
\email{marc.pegon@univ-lille.fr}
\author[I. Tristani]{Isabelle Tristani}
\address[I. Tristani]{Université Côte d’Azur, CNRS, LJAD, Parc Valrose, F-06108 Nice, France} 
\email{isabelle.tristani@univ-cotedazur.fr}
\date{\today}
\begin{document}

\begin{abstract}In this paper, we provide a result of exponential stability for several dissipative linear kinetic equations with heavy-tailed equilibria. The approach, inspired by the so-called $L^2$-hypocoercivity method, is robust enough to provide estimates that are uniform in the anomalous diffusion limit. Moreover, it is able to deal with bounded domains with periodic boundary condition or general Maxwell boundary condition (from the pure specular to the pure diffusive case). In addition, our framework accommodates linear collisional operators that act simultaneously on the velocity and spatial variables. 
\bigskip

\noindent\textsc{Mathematics Subject Classification (2020):} 82C40, 76P05, 35Q84, 35R11, 35F16.

\medskip

\noindent\textsc{Keywords:} hypocoercivity, anomalous diffusion, diffusion limit, heavy-tailed distributions, kinetic boundary conditions.

\end{abstract}

\maketitle

\tableofcontents

\section{Introduction}
\subsection{The problem}
We are interested in a class of linear kinetic equations describing the evolution of particles in the phase space $\Omega\times\R^d$ where the space domain $\Omega \subset \R^d$ is bounded with $\CC^2$ boundary. Let us denote  by $n(x)$ the outward unit normal vector at $x \in \partial\Omega$ and by~$\d\sigma_{\! x}$ the $(d-1)$-dimensional Hausdorff measure on~$\partial\Omega$. The boundary conditions, which will be defined below, takes into account how particles are reflected by the wall and takes the form of a balance between the values of the trace on the outgoing and incoming velocities subsets of the boundary. Thus we define $\Sigma \coloneqq \partial\Omega \times \R^d$, $\Sigma_\pm^x \coloneqq \{ {v} \in \R^d; \pm \, {v} \cdot n(x) > 0 \}$ the sets of outgoing~($\Sigma_+^x$) and incoming~($\Sigma_-^x$) velocities at the point $x \in \partial\Omega$ as well as  
	\[
	\Sigma_\pm \coloneqq \Big\{ (x,{v}) \in \Sigma; \,\pm n(x) \cdot {v} > 0 \Big\} = \Big\{(x,{v}); \, x \in \partial\Omega, \, {v} \in \Sigma^x_\pm \Big \}\,. 
	\]
We denote by $\gamma f$ the trace of $f$ on $\Sigma$, and by $\gamma_{\pm} f = \mathbf 1_{\Sigma_{\pm}} \gamma f$ the traces on $\Sigma_{\pm}$ ($\gamma_+$ corresponding to outgoing velocities and $\gamma_-$ to incoming velocities). To lighten the notation, we shall use the abbreviation $f_\pm \coloneqq \gamma_\pm f$.

Let $\eps \in (0,1]$.	
For a density function~$f^\eps=f^\eps(t,x,v)$, $t \ge 0$, $x \in \Omega$, $v \in \R^d$, we consider the following equation 
	\begin{equation} \label{eq:dtf=Lf}
	\left\{
	\begin{array}{lllll}
		\partial_t f^\eps &=& \Lambda^\eps f^\eps \coloneqq - \eps^{1-2s} v \cdot \nabla_{\!x} f^\eps +\eps^{-2s} L f^\eps &\text{in} \quad &\R^+ \times \Omega \times \R^d \\[0,1cm] 
		\gamma_{\!-} f^\eps &=& \RR \gamma_{\!+} f^\eps &\text{on} \quad &\R^+ \times \partial \Omega \times \R^d \,, 
	\end{array}
	\right. 
	\end{equation}
where $L$ and $\RR$ stand for two linear collisional operators, all of which will be described in details below. 
Let us mention that the trace functions $\gamma_\pm f$ are well-defined in our framework. This can be seen by adapting results coming from~\cite{Mischler-CPDE}. We also refer to~\cite{Bernou,DHHM,BCMT} for more details on the topic.

Our goal is to investigate the long-time behavior of solutions to linear equations with heavy-tailed equilibra. More precisely, in~\eqref{eq:dtf=Lf}, the kernel of the collision operator $L$ is spanned by a distribution $M$ that only depends on $v$ and has infinite energy. Our study includes a linear Boltzmann operator as well as a L\'evy--Fokker--Planck operator that both have dissipativity properties in velocity. Before going into the description of the main results and strategy of our proof, let us recall that results on large time behavior of solutions to the homogeneous version of~\eqref{eq:dtf=Lf} which simply writes~$\partial_tf(t,v) = L f(t,v)$, have been obtained in~\cite{Gentil-Imbert} in spaces of type~$L^2\big(M^{-1} \d v\big)$ (among others) and later in~\cite{Tristani-CMS} in larger Lebesgue spaces (in the case where $L$ is a L\'evy--Fokker--Planck operator, see~\eqref{def:LFP}). Notice that the presence of the transport operator in our equation~\eqref{eq:dtf=Lf} makes the analysis more intricate and to handle this, we will rely on hypocoercivity techniques. Hypocoercivity methods allow to recover dissipativity in the whole phase space and rely on the understanding of the interplay between the transport operator~$-v \cdot \nabla_{\!x}$ which is conservative and the linear collision operator $L$ which is dissipative in the velocity variable (see~\eqref{eq:coercivity}). The goal is to construct a Lyapunov functional for the whole problem. 
Note that the transport and the collision operators do not commute, which gives non trivial mixing properties, and that the transport operator does not preserve collision invariants. The macroscopic quantities are thus mixed inside the spatial domain by the transport operator. Thanks to this type of method, the long time behavior of solutions to~\eqref{eq:dtf=Lf} has already been studied in simpler geometries ($\Omega = \T^d$ or $\R^d$) in~\cite{BDL,AHHT1}. While we derive uniform in $\eps$ estimates, the goal of the present paper is not to study the anomalous diffusion limit $\eps\to0$ in bounded domains. For details about this limit, we refer to earlier works, non exhausltively  \cite{mellet2011fractional, benabdallah2011fractional, cesbron2018anomalous, cesbron2022fractional,dechicha2024fractional} and references therein.

To the best of our knowledge, it is the first time that the long-time behavior of this type of equation is studied in a bounded domain with general Maxwell boundary conditions. Moreover, our analysis provides a robust enough approach to get results that are uniform in the anomalous diffusion limit and to deal with the simpler geometry of the torus.

 Let us mention that the energy and moment method that we develop is not adapted to the whole space setting. Indeed, it fails to capture the fractional diffusive behavior of the dissipation term that arises at low frequencies (see~\cite{BDL}). As a result, our argument fails to recover the optimal decay rate of solutions obtained in~\cite{BDL}, which matches that of the fractional heat equation. This stems from the fact that in our entropy functional, beyond the~$\eps$-dependencies, the order of the inverse Laplacian and velocity average operators is not the same as in~\cite{BDL}. Note that this point is crucial in our framework to have the proper definition of our operator $(\id - \Delta_x)^{-1}$ through the resolution of an elliptic problem.
 
To end this part, we would like to emphasize the fact that our approach is inspired by previous works on~$L^2$-hypocoercivity but we have to carefully tailor this method in order to be able to take into account the fact that the equilibrium of our equation is heavy-tailed and the fact that we aim at obtaining a result which is uniform in the small parameter of diffusion limit. Moreover, since we want to deal with general bounded domains, the use of Fourier transform is not well-suited, it is thus crucial to have an approach with no Fourier transform. It also allows us to deal with linear collisional operators (see~\eqref{def:L1} and~\eqref{def:LFP}) that can depend (in a suitable way) of the variable $x$. 

\medskip
\noindent 
{\it The collision operator.}
Throughout the paper, we will consider two types of collisional operators. The first one will be a so-called {\em linear Boltzmann} operator. We introduce the distribution $M_1$ defined by 
	\begin{equation} \label{def:M1}
	M_1(v) \coloneqq \frac{c_{d,s}}{\langle v \rangle^{d+2s}}\,,\quad s \in \left(0,1\right)
	\end{equation}
where $\langle v\rangle^2 := 1+|v|^2$, the positive constant $c_{d,s}$ is such that $\int_{\R^d} M_1(v) \, \d v = 1$
and shall consider the following linear operator 
	\begin{equation} \label{def:L1}
	L_1g(x,v)  \coloneqq \int_{\R^d} \sigma(x,v,v') \left[g(v')M_1(v) - g(v) M_1(v')\right] \, \d v'
	\end{equation}
where $\sigma :\Omega \times \R^d \times \R^d \to \R^+$ is measurable and such that $\sigma(x,v,v') = \sigma(x,v',v)$ for any~$(x,v,v')$. We furthermore suppose that there exist $\sigma_0>0$ and $\sigma_1>0$ such that 
	\begin{equation} \label{eq:bound-sigma}
	\sigma_0 \leq \sigma(x,v,v') \leq \sigma_1 \,, \quad \forall \, (x,v,v') \in \Omega \times \R^d \times \R^d\,. 
	\end{equation}
We have $\operatorname{Ker} L_1 = \operatorname{Span} M_1$ and it is worth noticing that when $\sigma \equiv 1$, we recover the simplest form of the linear Boltzmann operator: 
	\begin{equation} \label{def:BGK}
	L_1 g(x,v) = \rho[g](x) M_1(v) - g(x,v) \qquad \text{with} \qquad \rho[g](x) \coloneqq \int_{\R^d} g(x,v) \, \d v\,.
	\end{equation}
The second operator that we shall consider is a {\em L\'evy--Fokker--Planck} operator defined through 
	\begin{equation} \label{def:LFP}
	L_2 g(x,v) \coloneqq \nu(x) \left[- (-\Delta_v)^s g(v) + \nabla_{\!v} \cdot (vg(v))\right]\,, \quad  s \in \left(0,1\right)\,,
	\end{equation}
where $\nu : \Omega \to \R^+$ is measurable. We furthermore assume that there exist $\nu_0,\nu_1>0$ such that 
    \begin{equation} \label{eq:bound-nu}
    \nu_0 \leq \nu(x) \leq \nu_1\,, \quad \forall\,x \in \Omega \,. 
    \end{equation}
In this case, it is well-known that $\operatorname{Ker} L_2 = \operatorname{Span} M_2$ with $M_2$ radially symmetric which is such that $\int_{\R^d} M_2(v) \, \d v =1$ and satisfying the following bounds:
	\begin{equation} \label{eq:bound-M2}
	\langle v \rangle^{-d-2s} \lesssim M_2(v) \lesssim \langle v \rangle^{-d-2s}\,, \quad \forall \, v \in \R^d\,. 
	\end{equation}
For this, we refer to~\cite{AHHT1,AHHT2} and references therein. 

Since we are going to treat both cases in a unified framework, we shall drop the indices in the notation, we will denote by $L$ the operator $L_1$ or $L_2$ indifferently and by $M$ the distribution $M_1$ or $M_2$ of mass $1$ that spans the kernel of $L_1$ or $L_2$. 

\medskip
\noindent 
{\it The reflection operator.} Let us introduce $\alpha : \partial\Omega \to [0,1]$ a Lipschitz function, called the accommodation coefficient and assume from now on that  
\begin{equation}\label{hyp:accomodation}
\text{either }\Bigl(\frac12<s<1\Bigr)\ \text{ or }\  \Bigl(0<s<1\text{ and }\alpha \equiv 0\Bigr).
\end{equation}
We assume that the reflection operator acts locally in time and position, namely
	\begin{equation} \label{def:reflection}
	(\RR \gamma_{\!+}  g)(t,x,v) =  \RR_x (\gamma_{\!+}  g (t,x,\cdot))(v)
	\end{equation}
and more specifically it is a possibly position dependent Maxwell boundary condition operator 
	\begin{equation}\label{eq:boundary} 
	\RR_x (g (x,\cdot))(v)  \coloneqq (1-\alpha(x))   g (x , R_x v) + \alpha(x) D g  (x,v)\,,
	\end{equation}
for any $(x,v) \in \Sigma_-$ and for any function $g : \Sigma_+ \to \R$. Here $R_x$ is the specular reflection operator
	\[
	R_x v \coloneqq v - 2 n(x) (n(x) \cdot v)\,,
	\]
and $D$ is the diffusive operator
	\begin{align} \label{eq:def_D}
	D g(x,v) \coloneqq c_M M(v) \bar g (x) 
	\qquad \text{with} \qquad
	\bar g (x) \coloneqq \int_{\Sigma^x_+} g(x,w) \, n(x) \cdot w \, \d w \,,
	\end{align}
where the constant  $c_M$ is such that  $c_M \bar{M} = 1$ and we recall that $M$ is the distribution of mass $1$ that spans the kernel of the collision operator $L$. 
\begin{rem}
Notice that the restriction $s>1/2$ in the models that we consider is only necessary for $\bar M$ to be well-defined. This restriction can be dropped if $\alpha \equiv 0$ which explains hypothesis \eqref{hyp:accomodation}.
\end{rem}
The boundary condition \eqref{eq:boundary} corresponds to the \emph{pure specular reflection} boundary condition when  $\alpha \equiv 0$ 
and it corresponds to the \emph{pure diffusive} one when~~$\alpha \equiv 1$. 

One can easily prove that when $g$ satisfies $\gamma_- g = \RR \gamma_+g$ with $\RR$ given in~\eqref{eq:boundary}, we have 
	\[
	\int_{\R^d} \gamma g \, n(x) \cdot v \, \d v = 0 \,, \quad \forall \,x \in \partial \Omega\,.
	\]
This in particular implies that if $f^\eps$ is solution to~\eqref{eq:dtf=Lf}, then 
	\[
	\frac{\d}{\d t} \int_{\Omega \times \R^d} f^\eps(t,x,v) \, \d v \, \d x = 0\,. 
	\]
We can thus suppose that $\int_{\Omega \times \R^d} f^\eps(t,x,v) \, \d v \, \d x = 0$ for any $t \geq 0$. 

To conclude this discussion on the boundary condition, we note that hypocoercivity tools do not appear well-suited to handle diffusive boundary conditions when the velocity distribution at the boundary differs from the interior equilibrium. Indeed, these methods rely on knowing the global equilibrium distribution and achieving some separation of variables, as the velocity distribution is used to define suitable weighted functional spaces. This type of configuration is studied in~\cite{Bernou} using probabilistic tools (Doeblin and Harris theorems). The author establishes the existence of an equilibrium for the equation but obtains no information on its form (in particular, its behavior in the boundary layer). Nevertheless, a stability result for this equilibrium is obtained (though the analysis is restricted to the case $\eps=1$).
\subsection{Functional framework} \label{subsec:functional}
We introduce the Hilbert space $\HH \coloneqq L^2(M^{-1} \, \d v \, \d x)$ which is a natural space to study the linear operator $L$ (see \cref{subsec:coercivity}). As mentioned above, we have $\operatorname{Ker} L = \operatorname{Span} M$. Moreover, the projection $\pi$ onto the kernel of $L$ is given by $\pi g = \rho[g] M$ where $\rho[g]$ is the local mass of $g$ introduced in~\eqref{def:BGK}. We also define the microscopic part of $g$ by $g^\perp \coloneqq g - \pi g = g - \rho[g] M$ (the term microscopic is justified by the link with the diffusion limit problems).

For simplicity, we also introduce the following notations: for a suitable distribution $g$, we define the operator $D^\perp \coloneqq \mathrm{Id} - D$, where $D$ is given by~\eqref{eq:def_D} and  the following space 
	\[
	\partial \HH_+ \coloneqq L^2\big(\Sigma_+ ; M^{-1}(v) n(x) \cdot v \, \d v \, \d\sigma_x\big)\,,
	\] 
where we recall that $\d\sigma_x$ is the $(d-1)$-dimensional Hausdorff measure restricted to $\partial \Omega$. 
It is worth pointing out that the notation $D^\perp$ is justified by the fact that for any suitable~$f$ and $g$, we have~$Df(x,\cdot) \perp D^\perp g(x,\cdot)$ in $L^2(\Sigma^x_+; M^{-1}(v)n(x)\cdot v\,\d v)$. Indeed, a simple computation shows that 
	\begin{align*}
	&\int_{\Sigma^x_+} Df(x,v) D^\perp g(x,v) M^{-1}(v) \, n(x) \cdot v \, \d v \\
	&\qquad 
	= \int_{\Sigma^x_+} c_M \bar f(x) \, g(x,v) \, n(x) \cdot v \, \d v
	- \int_{\Sigma^x_+} c_M^2 M(v) \, \bar f(x) \, \bar g(x) \, n(x) \cdot v \, \d v= 0
	\end{align*}
where we used the fact that $c_M \bar M =1$ to get the last equality. 
	
\subsection{Main result}
The main result that we shall prove in this paper is the following:

\begin{theo}\label{theo:main}
Consider $\eps \in (0,1]$. There exists a scalar product $\la\!\la \cdot , \cdot \ra \! \ra_{\eps}$ on the space $\HH$ so that the associated norm $\Nt \cdot \Nt_\eps$ is equivalent to the usual norm $\| \cdot \|_{\HH}$ uniformly in $\eps \in (0,1]$, and for which the linear operator $\Lambda^\eps$ satisfies the following coercivity estimate: 
$$
\la \! \la \Lambda^\eps f , f \ra\!\ra_{\eps} 
\lesssim -  \Nt f \Nt^2_\eps - \frac{1}{\eps^{2s}} \|f^\perp\|_{\HH}^2\,, 
$$
for any $f \in \mathrm{Dom}(\Lambda^\eps)$ satisfying the boundary condition in~\eqref{eq:dtf=Lf} and $\int_{\Omega \times \R^d} f(x,v) \, \d x \, \d v =0$, where the multiplicative constant does not depend on $\eps \in (0,1]$. 
\end{theo}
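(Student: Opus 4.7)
The proof will follow the $L^2$-hypocoercivity strategy of Dolbeault--Mouhot--Schmeiser, carefully tailored to accommodate simultaneously the fractional scaling $\eps^{-2s}L - \eps^{1-2s}v\cdot\nabla_x$, the heavy-tailed equilibrium $M$, and the bounded-domain geometry with Maxwell reflection. The plan is to construct an auxiliary operator $A_\eps$ on $\HH$ and then define the modified scalar product
$$\la\!\la f, g \ra\!\ra_\eps := \la f, g\ra_{\HH} + \eta\, \bigl[\la A_\eps f, g\ra_{\HH} + \la A_\eps g, f\ra_{\HH}\bigr],$$
for some small $\eta\in(0,1)$ independent of $\eps$. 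The operator $A_\eps$ should be chosen so that $\eta A_\eps$ is a bounded perturbation of the identity in $\HH$ (yielding the norm equivalence), and so that $-\la A_\eps v\cdot\nabla_x \pi f, f\ra_{\HH}$ produces macroscopic dissipation controlling $\|\pi f\|_{\HH}^2$.

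Concretely, I would define $A_\eps f$ roughly of the form $M(v)\,\nabla_x u$, where $u = u[f]$ solves an elliptic problem of the type $(\id - \Delta_x) u = \nabla_x \cdot \int_{\R^d} v\,\chi_\eps(v)\, f(x,v)\,\d v$ on $\Omega$, with homogeneous Neumann (or Maxwell-compatible) boundary conditions, and $\chi_\eps$ is a weight localizing velocities at the anomalous scale $|v|\lesssim \eps^{-1}$ so that $\int v\otimes v\,\chi_\eps M\,\d v$ remains bounded uniformly after the correct $\eps$-rescaling. This truncation is the key mechanism through which the heavy tail of $M$ is reconciled with the fractional scaling: the $(\eps$-dependent$)$ velocity moments generated by $A_\eps$ conspire with the $\eps$-prefactors in $\Lambda^\eps$ so as to leave, at leading order, a uniform macroscopic dissipation $-\|\nabla_x(\id-\Delta_x)^{-1/2}\rho[f]\|_{L^2(\Omega)}^2$, which together with the zero-mean constraint $\int_{\Omega\times\R^d} f = 0$ and Poincaré's inequality bounds $\|\pi f\|_{\HH}^2$ from above.

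The computation then splits into three blocks. First, the diagonal contribution $\eps^{-2s}\la Lf, f\ra_{\HH}$ is handled via the microscopic coercivity of $L$ on $(\ker L)^\perp$ and supplies the term $-\eps^{-2s}\|f^\perp\|_{\HH}^2$. Second, the transport term $-\eps^{1-2s}\la v\cdot\nabla_x f, f\ra_{\HH}$ produces a boundary integral $\frac12 \int_\Sigma (n\cdot v)|f|^2 M^{-1}$ which is analyzed using the Maxwell boundary condition $f_- = \RR f_+$: splitting $\gamma_+ f = D\gamma_+ f + D^\perp \gamma_+ f$, one exploits that $R_x$ is an isometry (so the pure specular part cancels) and that, thanks to the orthogonality identity $D\perp D^\perp$ recalled above, the diffusive contribution enters with a nonpositive sign controlled by $\alpha$. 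Third, the cross terms $\eta[\la A_\eps\Lambda^\eps f, f\ra + \la \Lambda^\eps f, A_\eps f\ra]$ are decomposed along $f = \pi f + f^\perp$; the dominant piece $-\eps^{1-2s}\la A_\eps v\cdot\nabla_x \pi f, f\ra$ yields the macroscopic dissipation by construction of $A_\eps$, while all remaining pieces involve either $f^\perp$ (absorbable into $\eps^{-2s}\|f^\perp\|_{\HH}^2$ by Cauchy--Schwarz after verifying the relevant bounds on $A_\eps v\cdot\nabla_x$, $A_\eps L$, and $A_\eps^*$) or traces on $\partial\Omega$ (controlled using the Maxwell-compatible boundary conditions chosen for the elliptic problem defining $A_\eps$). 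Choosing $\eta$ small enough then yields the announced estimate.

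The main obstacle will be the simultaneous calibration of the velocity-truncation scale in $\chi_\eps$, the $\eps$-prefactors in $A_\eps$, and the boundary conditions of the elliptic problem so that (i) the macroscopic dissipation has coefficient bounded below independently of $\eps \in (0,1]$, (ii) the cross-term errors from $f^\perp$ can be absorbed into $\eps^{-2s}\|f^\perp\|^2$ rather than deteriorating as $\eps\to 0$, and (iii) the boundary traces generated by integration by parts against $A_\eps$ are compatible with the Maxwell reflection (both in the specular $\alpha\equiv 0$ case with $s\in(0,1)$ and in the genuinely diffusive case requiring $s>1/2$ so that $\bar M < \infty$). In a sense, points (i)--(iii) encode the fact that the present framework must replace the Fourier-based truncation of \cite{BDL} by an $\eps$-dependent elliptic construction adapted to $\Omega$ and to the boundary operator $\RR$, and this is precisely where the novelty of the argument lies.
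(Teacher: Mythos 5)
Your proposal captures the correct high-level strategy, which is indeed the one followed in the paper: modify the $\HH$ scalar product by a small $\eps$-dependent bilinear form built from an elliptic inverse $(\id-\Delta_x)^{-1}$ with Neumann-type boundary conditions, use an $\eps$-dependent velocity truncation (the paper's weight $\langle\eps v\rangle^{-2}$ in the twisted moments $\Re,\Je$ plays exactly the role of your $\chi_\eps$), get the microscopic dissipation $-\eps^{-2s}\|f^\perp\|^2_\HH$ from \cref{lem:coercivity} together with a boundary dissipation in $\alpha(2-\alpha)$, extract the macroscopic dissipation $\sim -\|\rho[f]\|^2_{L^2_x}$ from the cross term via Poincar\'e--Wirtinger and the zero global mass, and close by taking the smallness parameter ($\delta$ in the paper) small uniformly in $\eps$. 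Your identification of the macroscopic dissipation as $\|\nabla_x(\id-\Delta_x)^{-1/2}\rho[f]\|^2_{L^2}$ is consistent with what comes out of the paper's term $B_{111}$ (see \eqref{eq:eqB111} and \eqref{eq:boundu1}). The calibration challenges you list in (i)--(iii) are also exactly the ones the proof addresses.

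There is, however, a genuine structural gap in the proposed auxiliary operator $A_\eps$. You take $A_\eps f$ built from $u[f]$ solving $(\id-\Delta_x)u = \operatorname{div}_x \int v\,\chi_\eps f\,\d v$, i.e.\ with a \emph{divergence} as right-hand side. With that choice the variational formulation \eqref{eq:var} only gives $u\in H^1(\Omega)$ by \eqref{eq:boundu}, not $H^2(\Omega)$, so the trace of $\nabla_x u$ on $\partial\Omega$ is not controllable. But precisely such traces appear once you integrate by parts the transport operator against the auxiliary part of the norm: they are the analogues of the boundary terms $B_{113}$ and $B_{123}$ in the proof of \cref{lem:B}, which involve $\nabla_x u[f]|_{\partial\Omega}$ and are estimated via the $H^2$-based trace inequality \eqref{eq:traceu2}. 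The paper therefore applies $(\id-\Delta_x)^{-1}$ to the \emph{scalar} twisted density $\Re[f]$ with $\eta_2=0$, so that the solution lies in $H^2(\Omega)$, and pairs $\nabla_x(\id-\Delta_x)^{-1}\Re[f]$ with the twisted current $\Je[g]$; see \eqref{def:scalar}--\eqref{def:Nt}. This reversal of the order between the elliptic inverse and the velocity-averaging is exactly what the introduction singles out as ``crucial in our framework to have the proper definition of our operator $(\id-\Delta_x)^{-1}$ through the resolution of an elliptic problem.'' With your DMS/BDL-ordered elliptic problem, the boundary step would break down in a general bounded $\Omega$ (even though it is harmless on the torus or in $\R^d$, where there are no traces). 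A secondary, related point: the homogeneous Neumann condition $n\cdot\nabla_x u=0$ (which the paper has for free since $\eta_2=0$) is used in a nontrivial way in $B_{113}$ to kill the specular and diffusive contributions produced by \cref{lem:boundary}; with your inhomogeneous Neumann datum $n\cdot\nabla_x u = -n\cdot\int v\chi_\eps f$, those contributions would not cancel. Finally, $A_\eps f = M(v)\nabla_x u$ is vector-valued and therefore does not map $\HH$ into itself as written; this is presumably the ``roughly'' you flagged, but resolving it correctly (e.g.\ $A_\eps f = M(v)\,\frac{v}{\langle\eps v\rangle^2}\cdot\nabla_x(\id-\Delta_x)^{-1}\Re[f]$) is precisely what leads back to the paper's form of the cross term.
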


From this result, we are able to deduce the following stability result:
\begin{cor} \label{cor:main}
Let $\eps \in (0,1]$ and~$f_{\rm in}^\eps \in \HH$ such that $\int_{\Omega \times \R^d} f_{\rm in}^\eps(x,v) \, \d x \, \d v =0$. There exist constants $\lambda>0$ and $C>0$ independent of $\eps \in (0,1]$ such that for any solution $f^\eps$ to~\eqref{eq:dtf=Lf} associated to~$f_{\rm in}^\eps$, for any $\eps \in (0,1]$ and for any~$t \geq 0$, there holds
	\[
	\| f^\eps(t)\|_\HH \leq C e^{-\lambda t} \| f_{\rm in}^\eps \|_\HH
	\qquad \text{and} \qquad 
	\frac{1}{\eps^{2s}} \int_0^\infty \|(f^\eps)^\perp(\tau)\|^2_\HH \, e^{2\lambda\tau} \, \d\tau \leq C \|f_{\rm in}^\eps\|^2_\HH\,.
	\] 
\end{cor}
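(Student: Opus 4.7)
The plan is to derive both bounds via a Grönwall-type argument on the modified energy $\Nt f^\eps(t)\Nt_\eps^2$ provided by \cref{theo:main}. First, one has to observe that the zero-mean condition is preserved by the flow (this was noted just before the statement of the theorem, as a consequence of $\int_{\R^d}\gamma f\, n(x)\cdot v\,\d v=0$), so the coercivity estimate applies at every time $t\ge 0$. For a sufficiently regular solution, $f^\eps(t)\in \mathrm{Dom}(\Lambda^\eps)$ and one can compute
\[
\frac{\d}{\d t}\Nt f^\eps(t)\Nt_\eps^2 = 2\la\!\la \Lambda^\eps f^\eps(t),f^\eps(t)\ra\!\ra_\eps \le -2 c_1 \Nt f^\eps(t)\Nt_\eps^2 - \frac{2 c_2}{\eps^{2s}}\|(f^\eps)^\perp(t)\|_\HH^2,
\]
for some constants $c_1,c_2>0$ independent of $\eps\in(0,1]$, both coming from the $\lesssim$ in the coercivity estimate.

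For the first bound, I would discard the non-positive microscopic term and apply Grönwall's lemma to obtain $\Nt f^\eps(t)\Nt_\eps^2 \le e^{-2c_1 t} \Nt f_{\mathrm{in}}^\eps\Nt_\eps^2$. Setting $\lambda := c_1$ and invoking the uniform equivalence $\Nt \cdot\Nt_\eps \simeq \|\cdot\|_\HH$ from \cref{theo:main} at both endpoints yields
\[
\|f^\eps(t)\|_\HH \le C\, e^{-\lambda t}\|f_{\mathrm{in}}^\eps\|_\HH
\]
with $C$ and $\lambda$ independent of $\eps\in(0,1]$.

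For the second bound, multiply the energy inequality by $e^{2\lambda \tau}$ and rewrite it as
\[
\frac{\d}{\d \tau}\!\left[e^{2\lambda \tau}\Nt f^\eps(\tau)\Nt_\eps^2\right] \le (2\lambda-2c_1)\, e^{2\lambda\tau}\Nt f^\eps(\tau)\Nt_\eps^2 - \frac{2c_2}{\eps^{2s}}\, e^{2\lambda\tau}\|(f^\eps)^\perp(\tau)\|_\HH^2.
\]
With $\lambda=c_1$ the first term on the right vanishes (or one may take $\lambda<c_1$ and keep it non-positive). Integrating over $\tau\in[0,T]$, using non-negativity of $e^{2\lambda T}\Nt f^\eps(T)\Nt_\eps^2$ to drop it, and applying the norm equivalence to bound $\Nt f_{\mathrm{in}}^\eps\Nt_\eps^2 \le C\|f_{\mathrm{in}}^\eps\|_\HH^2$ give
\[
\frac{2c_2}{\eps^{2s}}\int_0^T e^{2\lambda\tau}\|(f^\eps)^\perp(\tau)\|_\HH^2\,\d\tau \le C\|f_{\mathrm{in}}^\eps\|_\HH^2,
\]
uniformly in $T$ and in $\eps$. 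Sending $T\to\infty$ by monotone convergence yields the second estimate.

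The main technical obstacle is justifying the differentiation of $t\mapsto \Nt f^\eps(t)\Nt_\eps^2$ along the flow for general initial data in $\HH$, since \cref{theo:main} is stated pointwise on $\mathrm{Dom}(\Lambda^\eps)$. The natural remedy is to prove the estimates first for smooth solutions obtained from initial data in $\mathrm{Dom}(\Lambda^\eps)$ satisfying the Maxwell boundary condition and the zero-mean constraint, and then extend to all of $\HH$ by density (the bounds being stable by taking limits in $L^\infty_{\mathrm{loc}}([0,\infty);\HH)$ and $L^2_{\mathrm{loc}}$ for the microscopic part). This in turn relies on an $\eps$-dependent well-posedness theory for \eqref{eq:dtf=Lf} with the boundary conditions in~\eqref{eq:boundary}; this is standard but is the only non-trivial ingredient beyond \cref{theo:main}.
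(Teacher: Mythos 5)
Your proposal is correct and is the standard Grönwall argument that the paper implicitly relies on (the paper states \cref{cor:main} without writing out a proof, since it follows directly from \cref{theo:main} in exactly this way). Differentiating $\Nt f^\eps(t)\Nt_\eps^2$ along the flow, using the coercivity estimate, dropping the nonnegative microscopic term for the first bound, and integrating the weighted inequality for the second, together with the uniform norm equivalence from \cref{lem:eqnorm}, is precisely what is intended; you also correctly flag that the pointwise computation should be justified for data in $\mathrm{Dom}(\Lambda^\eps)$ and extended by density, which is the only point left implicit in the paper.
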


\begin{rem}
The exact same results can be obtained if $\Omega = \T^d$ the $d$-dimensional torus. The adaptation is straightforward (see \cref{sec:torus}). 
\end{rem}

One of the main challenges of fractional hypocoercivity is that the equilibrium of our equation~$M$ has infinite second order moments (see~\eqref{def:M1} and~\eqref{eq:bound-M2}). It prevents us from using standard $L^2$-hypocoercivity approaches developed in~\cite{Herau-AA,DMS1,DMS2,BCMT} for example. A result of fractional $L^2$-hypocoercivity has already been obtained in~\cite{BDL}. It heavily relies on the use of Fourier transform in the $x$-variable which is not adapted to our framework for two reasons: we want to deal with a collision operator with a collision kernel that may depend on $x$ and we want to deal with a general bounded domain. Moreover, we aim at getting estimates that are uniform in $\eps$ in the anomalous diffusion limit. However, let us mention that our work is inspired by~\cite{BDL} since we also introduce weights into the new norm~$\Nt \cdot \Nt_\eps$ introduced in~\eqref{def:Nt} in view of counterbalancing the lack of integrability of the equilibrium of our equation. Concerning the treatment of the boundaries, our approach is based on tools (that have to be refined in our case) introduced in~\cite{BCMT,Carrapatoso-Mischler}. Indeed, our new norm involves an inverse Laplace operator that has to be defined conveniently to deal with the boundary terms. In the paper~\cite{BCMT}, the authors also obtain uniform in $\eps$ results in the framework of classical diffusion limit. Here, the approach is more involved because we need to introduce an $\eps$-dependency in the definition of the new norm but also in the definition of our ``twisted'' macroscopic quantities, see~\eqref{def:twisted}. The introduction of these new macroscopic quantities is motivated by a heuristic stemming from the study of the fractional diffusion limit. Our definition of the new norm differs from that in~\cite{BCMT} also since instead of defining the inverse of the Laplacian, we define the inverse of $\id - \Delta_x$ (see \cref{subsec:elliptic}). This leads to a significant simplification of our proofs since we shall consider macroscopic quantities whose global masses are not vanishing.

\Cref{sec:prelim,sec:proof} are devoted to the proof of our main result in the case where $\Omega$ is a bounded domain with general Maxwell boundary condition. In the last part (see \cref{sec:torus}), we explain the slight changes that need to be made to adapt the proof to the cases of the torus. 

To end this introduction, we mention that we shall use the same notation $C$ for positive constants depending only on fixed numbers or abbreviate $\leq C$ by $\lesssim$.

\medskip
 \noindent\textbf{Acknowledgments.} 
 MH and MP acknowledge the support of the CDP C2EMPI, together with the French State under the France-2030 programme, the University of Lille, the Initiative of Excellence of the University of Lille, the European Metropolis of Lille for their funding and support of the R-CDP-24-004-C2EMPI project. MP is partially supported by the ANR Project STOIQUES (ANR-24-CE40-2216). IT was supported by the French government through the France 2030 investment plan managed by the ANR, as part of the Initiative of Excellence Universit\'e  C\^ote d'Azur under reference number ANR-15-IDEX-01 as well as by the MaDynOS ANR-24-CE40-3535-01.
 
\section{Preliminaries} \label{sec:prelim}
\subsection{Elliptic problems} \label{subsec:elliptic}

We consider $\eta_1$ and $\eta_2$ in $L^2(\Omega)$, and then define $u \in H^1(\Omega)$ as the solution to the variational problem 
	\begin{equation} \label{eq:var}
	\int_\Omega u w\,\d x+\int_\Omega \nabla_{\!x} u \cdot \nabla_{\!x} w \, \d x= \int_{\Omega} \left(w \, \eta_1 - \nabla_{\!x} w \cdot \eta_2 \right)\, \d x\,, 
	\quad \forall \, w \in H^1(\Omega)\,. 
	\end{equation}
This is exactly the variational solution to the following equation with Neumann boundary condition:
	\[
	(\id-\Delta_x) u = \eta_1 + \operatorname{div}_x \eta_2 \quad \text{in} \quad \Omega\,, \quad 
	n(x) \cdot \nabla_{\!x} u = - \eta_2 \cdot n(x) \quad \text{on} \quad \partial\Omega\,.
	\]
In what follows, we shall use the notation $u = (\id-\Delta_x)^{-1} (\eta_1 + \operatorname{div}_x \eta_2)$. This variational problem can be solved thanks to the Lax--Milgram theorem. Moreover, we also have the following estimate:
	\begin{equation} \label{eq:boundu}
	\|u\|_{H^1(\Omega)} \leq \|\eta_1\|_{L^2(\Omega)} + \|\eta_2\|_{L^2(\Omega)}\,.
	\end{equation}
In the case where $\eta_2=0$, we have $u\in H^2(\Omega)$ with
\[
\|u\|_{H^2(\Omega)} \lesssim \|\eta_1\|_{L^2(\Omega)}\,.
\]
Indeed, by \cref{eq:boundu} we already know $u\in H^1(\Omega)$ with $\|u\|_{H^1(\Omega)} \leq \|\eta_1\|_{L^2(\Omega)}$. In addition for any $i=1,\dotsc,d$ we have $(\id-\Delta_x) \partial_{x_i} u= \operatorname{div}_x(\eta_1 e_i)$, where $e_i$ is the $i$-th vector of the canonical basis of $\R^d$, so that applying \eqref{eq:boundu} again gives $\nabla_{\!x} u\in H^1(\Omega)$ with
\[
\|\nabla_{\!x} u\|_{H^1(\Omega)} \lesssim \|\eta_1\|_{L^2(\Omega)}.
\]
We deduce $u\in H^2(\Omega)$ with the desired bound. By the trace inequality, this implies
	\begin{equation} \label{eq:traceu2}
	\|u\|_{H^1(\partial\Omega)}\lesssim\|u\|_{H^2(\Omega)}  \lesssim \|\eta_1\|_{L^2(\Omega)}\,.
	\end{equation}
For any suitable distribution $g=g(x)$, we denote by $\langle g \rangle$ its average in space, that is,~~$\langle g \rangle \coloneqq |\Omega|^{-1}\int_{\Omega} g(x) \, \d x$. In addition to the previous estimates, when $\eta_2=0$ and $\langle \eta_1\rangle=0$, then $\langle u\rangle=0$. Thus, noticing that, taking $w=u$ as a test function, we have
\[
	\|\nabla_{\!x}u\|^2_{L^2(\Omega)}+\|u\|^2_{L^2(\Omega)} = \langle \eta_1,u\rangle_{L^2(\Omega)}
\]
and applying the Poincar\'e--Wirtinger inequality, one gets
	\begin{equation}\label{eq:boundu0}
	(1+\lambda_1) \|u\|^2_{L^2(\Omega)}\leq \langle \eta_1,u \rangle_{L^2(\Omega)}
	\end{equation}
where $\lambda_1>0$ is the first non-zero eigenvalue of the Neumann Laplacian in $\Omega$. By using the Cauchy--Schwarz inequality, this in particular implies that 
	\[
	\|u\|_{L^2(\Omega)} \leq \frac{1}{1+\lambda_1} \|\eta_1\|_{L^2(\Omega)}
	\]
from which we deduce, using once more the Cauchy--Schwarz inequality, that 
	\begin{equation} \label{eq:boundu1}
	\langle \eta_1,u \rangle_{L^2(\Omega)} \leq \frac{1}{1+\lambda_1}\|\eta_1\|_{L^2(\Omega)}^2\,.
	\end{equation}

\subsection{Coercivity estimate} \label{subsec:coercivity}
 If $L=L_1$ is given by~\eqref{def:L1}, we have the following coercivity property for $L$ in~$\HH$ (see~\cite{DGP,Mellet-Indiana}):	$	\langle L g, g \rangle_{\HH} \leq - \sigma_1 \|g^\perp\|^2_\HH	$ which reduces to the equality~~$	\langle L g, g \rangle_{\HH} = - \|g^\perp\|^2_{\HH}$ 
if $L$ is simply the simple Boltzmann linear operator~\eqref{def:BGK}. In the case of the L\'evy--Fokker--Planck operator introduced in~\eqref{def:LFP}, we know from~\cite{Gentil-Imbert} that there exists a constant $C>0$ such that $\langle L g, g \rangle_{\HH} \leq -C\nu_0\|g^\perp\|^2_{\HH}$ where $\nu_0>0$ has been defined in~\eqref{eq:bound-nu}. 
 In short, for all the collision operators that we consider, there exists $\lambda_0>0$ such that 
 	\begin{equation} \label{eq:coercivity}
	\langle L g, g \rangle_{\HH} \leq - \lambda_0 \|g^\perp\|^2_{\HH} \,.
	\end{equation}
	The same arguments as the ones used in~\cite[Lemma~3.1]{BCMT} give the following result (a crucial point being that the distribution $M$ is radially symmetric). We provide a proof of this result for sake of completeness. 
\begin{lem} \label{lem:coercivity} 
For any distribution~$g=g(x,v) \in \operatorname{Dom}(\Lambda^\eps)$ satisfying $\gamma_- g = \RR \gamma_+g$ with $\RR$ given in~\eqref{eq:boundary}, we have:
	\begin{equation} \label{eq:coercivity2}
	\langle \Lambda^\eps g, g \rangle_{\HH} \leq - \frac{\lambda_0}{\eps^{2s}} \|g^\perp\|^2_\HH - \frac{\eps^{1-2s}}{2} \Big\|\sqrt{\alpha(2-\alpha)} D^\perp g_+\Big\|^2_{\partial \HH_+}\,. 
	\end{equation}
\end{lem}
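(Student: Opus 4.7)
\medskip
\noindent\textbf{Proof plan.} The plan is to split $\Lambda^\eps = -\eps^{1-2s}\, v\cdot\nabla_{\!x} + \eps^{-2s} L$ and treat the two contributions separately. The collisional part is immediate: the coercivity estimate~\eqref{eq:coercivity} already gives $\eps^{-2s}\langle Lg,g\rangle_\HH \leq -\lambda_0\eps^{-2s}\|g^\perp\|^2_\HH$. All the work therefore lies in the transport part, which after integration by parts in $x$ (using that $v$ does not depend on $x$) produces only a boundary term:
\[
\langle v\cdot\nabla_{\!x} g, g\rangle_\HH = \tfrac{1}{2}\int_{\partial\Omega\times\R^d} (v\cdot n(x))\,(\gamma g)^2\, M^{-1}(v)\, \d v\, \d\sigma_{\!x} = \tfrac{1}{2}\bigl(I_+ - I_-\bigr),
\]
where $I_\pm(x) \coloneqq \int_{\Sigma^x_\pm} |v\cdot n(x)|\, g_\pm^2(x,v)\, M^{-1}(v)\,\d v$ and $I_\pm \coloneqq \int_{\partial\Omega} I_\pm(x)\,\d\sigma_{\!x}$.

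The next step is to rewrite $I_-$ using the boundary condition $g_- = \RR g_+ = (1-\alpha)g_+(\cdot, R_x\cdot) + \alpha Dg_+$. Expanding the square produces three terms, each of which I would compute by exploiting two key facts: (i) the map $v\mapsto R_x v$ is a measure-preserving involution between $\Sigma^x_+$ and $\Sigma^x_-$ that leaves $|v\cdot n(x)|$ invariant, and (ii) $M$ is radially symmetric, so $M(R_x v) = M(v)$. The first term yields $(1-\alpha)^2 I_+(x)$ by the change of variable $w = R_x v$. The cross term and the third term both collapse, using $c_M \bar M = 1$, to multiples of $c_M\,\bar g_+(x)^2$, which precisely equals $\|Dg_+(x,\cdot)\|^2_{L^2(\Sigma^x_+;M^{-1}n\cdot v\,\d v)}$. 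Summing the three contributions and using $2\alpha(1-\alpha)+\alpha^2 = \alpha(2-\alpha)$ with $(1-\alpha)^2 + \alpha(2-\alpha) = 1$, one obtains
\[
I_-(x) = \|Dg_+(x,\cdot)\|^2 + (1-\alpha(x))^2\,\|D^\perp g_+(x,\cdot)\|^2.
\]
On the other hand, by the orthogonality of $Dg_+$ and $D^\perp g_+$ in $L^2(\Sigma^x_+;M^{-1}n\cdot v\,\d v)$ recalled in Section~1.2, $I_+(x) = \|Dg_+(x,\cdot)\|^2 + \|D^\perp g_+(x,\cdot)\|^2$, hence
\[
I_+(x) - I_-(x) = \bigl[1-(1-\alpha(x))^2\bigr]\,\|D^\perp g_+(x,\cdot)\|^2 = \alpha(x)(2-\alpha(x))\,\|D^\perp g_+(x,\cdot)\|^2.
\]

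Integrating over $\partial\Omega$ and combining with the $L$-coercivity yields exactly the claimed bound, the factor $\eps^{1-2s}/2$ being inherited from the transport prefactor. The only genuinely nontrivial step is the boundary computation, and the main (minor) obstacle there is to correctly organize the square of the Maxwell reflection so that the specular change of variable and the identity $c_M\bar M = 1$ combine to eliminate all cross terms in favor of $\alpha(2-\alpha)\|D^\perp g_+\|^2$; the radial symmetry of $M$ is what makes the specular part entirely harmless, leaving only the diffusive $D^\perp$ dissipation at the wall.
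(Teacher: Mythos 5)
Your proposal is correct and follows essentially the same route as the paper's proof: integration by parts to isolate the boundary term, the specular change of variables together with the radial symmetry of $M$, and the orthogonal decomposition $g_+ = Dg_+ + D^\perp g_+$ to reduce the difference $I_+-I_-$ to $\alpha(2-\alpha)\|D^\perp g_+\|^2$. The only (cosmetic) difference is that you expand $I_-$ term by term and invoke $c_M\bar M=1$ directly, while the paper first changes variables and then performs the orthogonal decomposition inside a single integral over $\Sigma_+$; the underlying algebra is identical.
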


\begin{proof}
By definition of $\Lambda^\eps$, we have:
	\[
	\langle \Lambda^\eps g, g \rangle_{\HH}
	= \eps^{-2s} \langle Lg,g\rangle_\HH - \eps^{1-2s} \langle v \cdot \nabla_{\!x} g,g \rangle_\HH \,.
	\]
The first term is estimated thanks to~\eqref{eq:coercivity}. Concerning the second one, an integration by parts gives 
	\[
	\la  v \cdot \nabla_{\!x} g , g \ra_{\HH}  
	= \int_{\Omega \times \R^d} (v \cdot \nabla_{\!x} g) \,g\, M^{-1} \, \d x \, \d v 
	= \frac12 \int_{\Sigma} \gamma g^2 M^{-1} n(x) \cdot v \, \d\sigma_{\! x} \, \d v\,.
	\]
We then decompose $\gamma g^2 =  g_{+}^2 \mathbf 1_{\Sigma_{+}} + g_{-}^2 \mathbf 1_{\Sigma_{-}}$ and use the boundary condition $\gamma_- g = \RR \gamma_+g$ with $\RR$ given in~\eqref{eq:boundary} to obtain
        \begin{align*}
        \la  v \cdot \nabla_{\!x} g , g \ra_{\HH}  
        &= \frac12 \int_{\Sigma_{+}}  g_{+}^2 M^{-1} |n(x) \cdot v| \, \d\sigma_{\! x} \, \d v 
        - \frac12 \int_{\Sigma_{-}}  g_{-}^2 M^{-1} |n(x) \cdot v| \, \d\sigma_{\! x} \, \d v \\
        &= \frac12 \int_{\Sigma_{+}}  g_{+}^2 M^{-1} |n(x) \cdot v |\, \d\sigma_{\! x} \, \d v \\
        &\quad 
        - \frac12 \int_{\Sigma_{-}}  \big \{ (1-\alpha(x))g_{+}(x,R_x v) + \alpha(x) D g_{+}(x,v)  \big\}^2 M^{-1} |n(x) \cdot v| \, \d\sigma_{\! x} \, \d v \, .
        \end{align*}
We apply the change of variables $v \to R_x v$, so that $\Sigma_{-}$ transforms into $\Sigma_{+}$, which yields
        \begin{align*}
        \la  v \cdot \nabla_{\!x} g , g \ra_{\HH}  
        &= \frac12 \int_{\Sigma_{+}}  g_{+}^2 M^{-1} |n(x) \cdot v| \, \d\sigma_{\! x} \, \d v \\
        &\quad
        - \frac12 \int_{\Sigma_{+}} \big \{ (1-\alpha(x))g_{+} + \alpha(x) D g_{+}  \big \}^2 M^{-1} |n(x) \cdot v |\, \d\sigma_{\! x} \, \d v\,,
        \end{align*}
since $Dg_{+}(x,R_x v) = Dg_{+}(x,v)$ because $M$ is radially symmetric and $|n(x) \cdot R_x v| = |n(x) \cdot v|$.
Writing $g_{+} = D^\perp g_{+} + D g_{+}$, one has 
	\begin{multline}
	\int_{\Sigma_+} g_+^2 M^{-1}  n(x) \cdot v \, \d\sigma_{\! x} \, \d v 
	= \int_{\Sigma_+} (Dg_+)^2 M^{-1}  n(x) \cdot v \, \d\sigma_{\! x} \, \d v \\
	+\int_{\Sigma_+} (D^\perp g_+)^2 M^{-1}  n(x) \cdot v \, \d\sigma_{\! x} \, \d v,
	\end{multline}
since $Dg_+(x,\cdot) \perp D^\perp g_+(x,\cdot) $ in $L^2(\Sigma^x_+; M^{-1}(v)n(x)\cdot v\,\d v)$ as recalled in \cref{subsec:functional}. 
All together, we get 
	\begin{align*}
	&\la  v \cdot \nabla_{\!x} g , g \ra_{\HH}  \\
	&\qquad 
	= \frac12 \int_{\Sigma_{+}}  \left\{ (Dg_{+})^2 +  (D^\perp g_{+})^2 - [ (1-\alpha(x)) D^\perp g_{+} +  D g_{+} ]^2\right\} M^{-1} n(x) \cdot v \, \d\sigma_{\! x} \, \d v \\
	&\qquad
	= \frac12 \int_{\Sigma_{+}}  \left\{ [1- (1-\alpha(x))^2] (D^\perp g_{+})^2 - 2(1-\alpha(x))  Dg_+ D^\perp g_+  \right\} M^{-1} n(x) \cdot v \, \d\sigma_{\! x} \, \d v \\
	&\qquad
	= \frac12 \int_{\Sigma_{+}} \alpha(x)(2-\alpha(x)) (D^\perp g_{+})^2M^{-1} n(x) \cdot v \, \d\sigma_{\! x} \, \d v\,,
	\end{align*}
which provides the wanted result. 
\end{proof}

\subsection{Boundary terms} 
We here recall a result coming from~\cite[Lemma~3.2]{BCMT} that shall be useful to deal with the boundary terms. We do not write the proof of this result here which is in the same spirit as the one of \cref{lem:coercivity} and exactly follows the lines of the one of~\cite[Lemma~3.2]{BCMT}. 
\begin{lem}\label{lem:boundary}
Let  $\phi: \R^d \to \R$ and $g$ satisfying $\gamma_- g = \RR \gamma_+g$ with $\RR$ given in~\eqref{eq:boundary}. For any $x \in \partial\Omega$, there holds
	\begin{align*}
	\int_{\R^d}  \phi(v) \gamma g(x,v) \, n(x) \cdot v \, \d v  
	& = \int_{\Sigma^x_{+}}  \phi(v) \alpha(x) D^\perp g_{+} \, n(x) \cdot v \, \d v  \\
	&\quad 
	+ \int_{\Sigma^x_{+}}  \left\{ \phi(v) - \phi(R_x v) \right\}  (1-\alpha(x)) D^\perp g_{+} \, n(x) \cdot v \, \d v  \\
	&\quad 
	+ \int_{\Sigma^x_{+}}   \left\{ \phi(v) - \phi(R_x v) \right\}  D g_{+} \, n(x) \cdot v \, \d v\, .
	\end{align*}
\end{lem}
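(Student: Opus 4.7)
The plan is to mimic the opening moves of the proof of \cref{lem:coercivity}: split the integral along $\Sigma^x_+\cup\Sigma^x_-$, substitute the Maxwell boundary condition on the incoming part, and then use the change of variables $v\mapsto R_x v$ to pull everything back to $\Sigma^x_+$, after which the computation reduces to a purely algebraic rearrangement.

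More concretely, I would start from
\[
\int_{\R^d}\phi(v)\gamma g(x,v)\,n(x)\cdot v\,\d v=\int_{\Sigma^x_+}\!\phi(v) g_+(x,v)\,n(x)\cdot v\,\d v+\int_{\Sigma^x_-}\!\phi(v) g_-(x,v)\,n(x)\cdot v\,\d v,
\]
substitute $g_-(x,v)=(1-\alpha(x))g_+(x,R_x v)+\alpha(x)Dg_+(x,v)$ on the second piece, and perform the change of variable $v\mapsto R_x v$. Since $R_x$ is an involution sending $\Sigma^x_-$ onto $\Sigma^x_+$ with $n(x)\cdot R_x v=-n(x)\cdot v$, and since $Dg_+(x,R_x v)=Dg_+(x,v)$ by the radial symmetry of $M$, this turns the incoming contribution into
\[
-\int_{\Sigma^x_+}\!\phi(R_x v)\left[(1-\alpha(x))g_+(x,v)+\alpha(x)Dg_+(x,v)\right]n(x)\cdot v\,\d v.
\]
Summing the two pieces and decomposing $g_+=Dg_++D^\perp g_+$ in the resulting integrand over $\Sigma^x_+$, the terms rearrange into
\[
\left[\phi(v)-\phi(R_x v)\right]Dg_+\,n(x)\cdot v+\left[\phi(v)-(1-\alpha(x))\phi(R_x v)\right]D^\perp g_+\,n(x)\cdot v.
\]

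The final step is to match the exact form of the statement by rewriting the coefficient of $D^\perp g_+$ as
\[
\phi(v)-(1-\alpha(x))\phi(R_x v)=\alpha(x)\phi(v)+(1-\alpha(x))\left[\phi(v)-\phi(R_x v)\right],
\]
which splits that contribution into the first two integrals of the claim, while the $Dg_+$ term produces the third. I do not expect any real obstacle: the whole argument is pure bookkeeping once the three key ingredients are in place, namely the Maxwell boundary condition, the involutivity of $R_x$ together with $|n(x)\cdot R_x v|=|n(x)\cdot v|$, and the radial symmetry of $M$ (used to freeze $Dg_+$ under $v\mapsto R_x v$). The only point worth being careful about is the algebraic identity above, which is precisely what packages the $D^\perp g_+$ contributions into the ``purely diffusive'' piece weighted by $\alpha(x)$ and the ``specular-defect'' piece weighted by $(1-\alpha(x))$ that the authors clearly want to estimate separately in the sequel.
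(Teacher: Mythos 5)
The paper omits the proof, pointing to \cite[Lemma~3.2]{BCMT} and noting it is ``in the same spirit'' as \cref{lem:coercivity}; your argument is precisely that: split over $\Sigma^x_\pm$, insert the Maxwell condition, change variables $v\mapsto R_xv$ using $n(x)\cdot R_xv=-n(x)\cdot v$ and the radial symmetry of $M$ to freeze $Dg_+$, then decompose $g_+=Dg_++D^\perp g_+$ and regroup. The computation is correct, including the final algebraic split $\phi(v)-(1-\alpha)\phi(R_xv)=\alpha\phi(v)+(1-\alpha)[\phi(v)-\phi(R_xv)]$, so this matches the intended proof.
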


\subsection{New macroscopic quantities} \label{subsec:macro}
When integrating our kinetic equation in velocity, we see that higher order moments in velocity appear due to the presence of the transport operator. It is thus natural to introduce, in addition to the local mass $\rho[g]$ already defined in~\eqref{def:BGK}, the moment of order one defined through:
	\[
	j[g] (x)\coloneqq \int_{\R^d} g (x,v) v \, \d v \,.
	\]
A distinctive feature of our framework is that we consider heavy-tailed distributions. In view of that, we introduce new ``twisted'' macroscopic quantities
	\begin{equation} \label{def:twisted}
	\Re [g] (x) \coloneqq \int_{\R^d} g (x,v) \, \frac{\d v}{\langle \eps v \rangle^2} 
	\qquad \text{and} \qquad 
	\Je [g] (x)\coloneqq \int_{\R^d} g (x,v)\, v \,\frac{\d v}{\langle \eps v \rangle^2} \,. 
	\end{equation}
Observe that with these definitions, at least formally $\Re\to\rho$ and $\Je\to j$ as $\eps\to0$. Further, the additional weight of order $-2$  counterbalances the lack of integrability of our heavy-tailed distributions. In particular $\Re[v_iv_jg]$, with $1\leq i,j\leq d$, which appears in the momentum equation, is well-defined for a merely integrable $g$. 
Let us derive some preliminary estimates on the twisted macroscopic quantities.
\begin{lem}
The macroscopic quantities $\Re [g]$ and $\Je [g]$ satisfy the following identities:
    \begin{align} 
	\Re [g](x)& = c_\eps \rho[g](x) + \int_{\R^d} g^\perp(x,v) \, \frac{\d v}{\langle \eps v \rangle^2}\,,\quad\text{ with  }c_\eps \coloneqq \int_{\R^d} M(v)  \, \frac{\d v}{\langle \eps v \rangle^2} \xrightarrow[\eps \to 0]{} 1\,, \label{eq:tilderho}\\
	\Re[g^\perp]&= -\int_{\R^d} g^\perp \frac{\eps^2|v|^2}{\langle \eps v \rangle^2} \, \d v\,,\label{eq:tilderho1}\\
	\Je[g] &= \Je[g^\perp]\,.\label{eq:tildej}
 	\end{align}
They additionally satisfy the following estimates,
	\begin{align}
	\left\|\Re[g]\right\|_{L^2_x(\Omega)} &\leq \left\|g\right\|_{\HH}\,,\label{eq:tilderho3}\\  
     \left\|\Re[g^\perp]\right\|_{L^2_x(\Omega)} & \lesssim \eps^s \|g^\perp\|_\HH\,, \label{eq:tilderho2}\\ 
 	\left\|\Je[g]\right\|_{L^2_x(\Omega)} &\lesssim \eps^{s-1} \|g^\perp\|_\HH\,,\label{eq:tildej2}\\ 	
 	\left\| \int_{\R^d} v_i v_j g^\perp \, \frac{\d v}{\langle \eps v \rangle^2} \right\|_{L^2_x(\Omega)} 	&\lesssim \eps^{s-2} \|g^\perp\|_\HH \,.\label{eq:order2}
 	\end{align}
\end{lem}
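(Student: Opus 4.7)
The three identities all rely on the orthogonal splitting $g = \rho[g] M + g^\perp$ together with two structural facts: $\int_{\R^d} g^\perp \, \d v = 0$ and the radial symmetry of $M$. For \eqref{eq:tilderho}, I would plug the splitting into the definition of $\Re[g]$ and note that $\rho[g]$ is independent of $v$; dominated convergence, with dominating function $M \in L^1(\R^d)$, gives $c_\eps \to 1$ as $\eps \to 0$. For \eqref{eq:tilderho1}, since $\int_{\R^d} g^\perp \, \d v = 0$, I would rewrite $\Re[g^\perp] = \int_{\R^d} g^\perp (\langle \eps v\rangle^{-2} - 1)\, \d v$ and simplify the parenthesized factor to $-\eps^2|v|^2/\langle \eps v\rangle^2$. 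For \eqref{eq:tildej}, the macroscopic contribution reads $\rho[g] \int_{\R^d} M(v)\, v/\langle \eps v\rangle^2 \, \d v$, which vanishes by oddness against the radial measure $M(v)/\langle \eps v\rangle^2\, \d v$.

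For the four estimates, the unifying tool is Cauchy--Schwarz in $v$ weighted by $M^{-1}$; each bound is thereby reduced to the weighted velocity moment
\[
I_k(\eps) \coloneqq \int_{\R^d} M(v) \frac{|v|^k}{\langle \eps v\rangle^4}\, \d v\,, \qquad k \in \{0,2,4\}\,.
\]
Bound \eqref{eq:tilderho3} needs only the trivial inequality $I_0(\eps) \leq \int_{\R^d} M\, \d v = 1$. The key scaling for the other three is
\[
I_k(\eps) \lesssim \eps^{2s-k}\,, \qquad k \in \{2,4\}\,,
\]
which I would prove by splitting the integration at $|v| = \eps^{-1}$ and using the pointwise decay $M(v) \lesssim \langle v\rangle^{-d-2s}$. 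On the inner region $\{|v|<\eps^{-1}\}$, where $\langle \eps v\rangle \sim 1$, the integral reduces to $\int_0^{\eps^{-1}} r^{k-2s-1}\,\d r \sim \eps^{2s-k}$, finite since $k>2s$ for $k\in\{2,4\}$ and $s\in(0,1)$. On the outer region $\{|v|\geq \eps^{-1}\}$, where $\langle \eps v\rangle \sim \eps|v|$, the integral becomes $\eps^{-4} \int_{\eps^{-1}}^\infty r^{k-4-2s-1}\,\d r \sim \eps^{2s-k}$, convergent at infinity since $s>0$.

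Plugging these scalings into the Cauchy--Schwarz bound yields \eqref{eq:tilderho2} via the representation \eqref{eq:tilderho1} (the extra prefactor $\eps^4$ combines with $I_4(\eps) \lesssim \eps^{2s-4}$ to give the $\eps^{2s}$ scaling), then \eqref{eq:tildej2} via $\Je[g] = \Je[g^\perp]$ and $I_2(\eps) \lesssim \eps^{2s-2}$, and finally \eqref{eq:order2} by direct insertion and $I_4(\eps) \lesssim \eps^{2s-4}$. Integrating in $x$ converts the pointwise-in-$x$ $L^2_v(M^{-1}\,\d v)$ norms that arise into $\|g^\perp\|_\HH$. The only nontrivial obstacle is the careful two-regime bookkeeping in the estimate of $I_k$; once this scaling is in hand, every bound follows from one line of Cauchy--Schwarz.
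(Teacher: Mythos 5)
Your proof is correct and follows essentially the same route as the paper: the identities come from the micro--macro splitting, cancellation of the zeroth moment of $g^\perp$, and the radial symmetry of $M$, and the four estimates follow from one application of the Cauchy--Schwarz inequality in $v$ against $M^{-1}$ reducing everything to a weighted moment bound. The only cosmetic difference is that the paper evaluates the moments $\int M(v)\,|v|^k\langle\eps v\rangle^{-4}\,\d v$ via the change of variable $v\mapsto\eps v$, whereas you split the integral at $|v|=\eps^{-1}$ and use the decay $M(v)\lesssim\langle v\rangle^{-d-2s}$; both give the same scaling $\eps^{2s-k}$.
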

\begin{proof}
Recalling that $g = \rho[g]M + g^\perp$ we immediately have \eqref{eq:tilderho}; \eqref{eq:tilderho1} comes from the fact that $\int_{\R^d} g^\perp \, \d v =0$ and we deduce~\eqref{eq:tildej} from the symmetry of $M$. Then by the Cauchy--Schwarz inequality,
	\[
	\left\|\Re[g]\right\|_{L^2_x(\Omega)}  
	\leq \left(\int_{\R^d} \frac{M(v)}{\langle \eps v \rangle^4} \, \d v\right)^{\frac12} \|g\|_\HH\leq \|g\|_\HH\,,
	\]
which is \eqref{eq:tilderho3}. Similarly, using \eqref{eq:tilderho1}, one has
    \[
    \left\|\Re[g^\perp]\right\|_{L^2_x(\Omega)}  
    \leq \left(\int_{\R^d} M(v) \frac{\eps^4|v|^4}{\langle \eps v \rangle^4} \, \d v\right)^{\frac12} \|g^\perp\|_\HH\,,
    \]
which implies \eqref{eq:tilderho2} after the change of variable $v \to \eps v$ in the first integral of the right-hand side. The other estimates are proved in a similar way.
\end{proof}

\section{Fractional hypocoercivity} \label{sec:proof}
\subsection{Definition of the new norm}
The purpose of this Section is to prove \cref{theo:main}. 
To this end, we introduce a new bilinear form 
on $\HH$ defined for $\eps \in (0,1]$ through
	\begin{multline} \label{def:scalar}
	\langle\!\langle f, g \rangle\!\rangle_\eps \coloneqq 
	\langle f, g \rangle_\HH  
	+ \eps\delta \langle \nabla_{\!x} (\id-\Delta_x)^{-1} \Re[f], \Je[g] \rangle_{L^2_x(\Omega)} \\
	+ \eps\delta \langle \nabla_{\!x} (\id-\Delta_x)^{-1} \Re[g], \Je[f] \rangle_{L^2_x(\Omega)}
	\end{multline}
where $\delta>0$ will be chosen small enough and $(\id-\Delta_x)^{-1} \Re[\cdot]$ is defined thanks to results recalled in \cref{subsec:elliptic} with $\eta_1 =  \Re[\cdot] $ and $\eta_2=0$. 
We denote by $\Nt\cdot \Nt_\eps$ the associated norm which writes as follows
	\begin{equation} \label{def:Nt}
	\Nt g \Nt^2_\eps \coloneqq 
	\|g\|^2_\HH + 2\eps\delta \langle \nabla_{\!x} (\id-\Delta_x)^{-1} \Re[g], \Je[g] \rangle_{L^2_x(\Omega)}\,.
	\end{equation}

\medskip
\begin{lem} \label{lem:eqnorm}
The norm $\Nt \cdot \Nt_\eps$ defined in~\eqref{def:Nt} and the classical norm $\|\cdot \|_\HH$ on $\HH$ are equivalent for $\delta$ small enough {\em uniformly in $\eps \in (0,1]$}. 
\end{lem}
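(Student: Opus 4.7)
The plan is to show that the cross term in~\eqref{def:Nt} is controlled by $C\delta\|g\|_\HH^2$ uniformly in $\eps\in(0,1]$, so that the equivalence follows for $\delta$ small enough. More precisely, I would aim at bounds of the form
\[
(1 - C\delta)\|g\|^2_\HH \leq \Nt g \Nt^2_\eps \leq (1 + C\delta)\|g\|^2_\HH,
\]
from which the conclusion is immediate upon choosing $\delta$ so small that $C\delta < 1$.

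First, I would apply the Cauchy--Schwarz inequality to the cross term to obtain
\[
\bigl|\langle \nabla_{\!x} (\id-\Delta_x)^{-1} \Re[g], \Je[g]\rangle_{L^2_x(\Omega)}\bigr|
\leq \|\nabla_{\!x} (\id-\Delta_x)^{-1} \Re[g]\|_{L^2_x(\Omega)}\,\|\Je[g]\|_{L^2_x(\Omega)}.
\]
To handle the first factor, I would invoke the elliptic estimate \eqref{eq:boundu} with $\eta_1 = \Re[g]$ and $\eta_2 = 0$, which gives
\[
\|\nabla_{\!x} (\id-\Delta_x)^{-1} \Re[g]\|_{L^2_x(\Omega)}\leq \|\Re[g]\|_{L^2_x(\Omega)},
\]
and then apply the estimate \eqref{eq:tilderho3} to deduce that this is bounded by $\|g\|_\HH$. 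For the second factor, I would use the moment estimate \eqref{eq:tildej2}, which gives $\|\Je[g]\|_{L^2_x(\Omega)} \lesssim \eps^{s-1}\|g^\perp\|_\HH$.

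Combining these bounds yields
\[
\eps\delta\,\bigl|\langle \nabla_{\!x} (\id-\Delta_x)^{-1} \Re[g], \Je[g]\rangle_{L^2_x(\Omega)}\bigr|
\lesssim \delta\,\eps^{s}\,\|g\|_\HH\,\|g^\perp\|_\HH \leq C\delta\,\|g\|^2_\HH,
\]
where in the last step I used $\eps \in (0,1]$, so that $\eps^s \leq 1$, together with $\|g^\perp\|_\HH\leq \|g\|_\HH$. The key point is that the $\eps^{s-1}$ degeneracy from the moment estimate is exactly compensated by the explicit prefactor $\eps$ in the definition of $\Nt\cdot\Nt_\eps$, producing the favorable factor $\eps^s \leq 1$; this is where the specific $\eps$-scaling in~\eqref{def:scalar} reveals its role and is what I would flag as the one point requiring care. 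Once this is observed, the desired two-sided inequality follows immediately, and choosing $\delta$ small enough (independently of $\eps$) concludes the proof.
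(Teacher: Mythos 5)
Your argument matches the paper's proof essentially line for line: Cauchy--Schwarz on the cross term, the elliptic bound \eqref{eq:boundu} for $\nabla_{\!x}(\id-\Delta_x)^{-1}\Re[g]$, the moment estimates \eqref{eq:tilderho3} and \eqref{eq:tildej2}, and the observation that the prefactor $\eps$ compensates the $\eps^{s-1}$ to leave a uniformly bounded factor $\eps^s$. The proposal is correct and takes the same route, merely spelled out in slightly more detail than the paper's one-line computation.
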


\begin{proof}
From~\eqref{eq:boundu}, the Cauchy--Schwarz inequality and \eqref{eq:tilderho3}-\eqref{eq:tildej2}, we have 
	\[
	\eps\left|\langle \nabla_{\!x} (\id-\Delta_x)^{-1} \Re[g], \Je[g] \rangle_{L^2_x(\Omega)}\right|
	\lesssim \eps\|\Re[g]\|_{L^2_x(\Omega)} \|\Je[g]\|_{L^2_x(\Omega)}\\
	\lesssim \eps^s\|g\|^2_\HH \,. 
	\]
We can conclude that the norms $\Nt\cdot\Nt_\HH$ and $\|\cdot\|_\HH$ are equivalent uniformly in $\eps \in (0,1]$ if~$\delta>0$ is small enough since $s>0$. 
\end{proof}
From now on, $\delta$ is chosen small enough so that the conclusion of \cref{lem:eqnorm} holds. 

\subsection{Proof of  \texorpdfstring{\cref{theo:main}}{Theorem~\ref{theo:main}}}
The goal is now to prove \cref{theo:main}. For this, we consider a distribution $f \in \mathrm{Dom}(\Lambda^\eps)$ satisfying the boundary condition in~\eqref{eq:dtf=Lf} and with vanishing global mass i.e. $\int_{\Omega \times \R^d} f(x,v) \, \d v \, \d x =0$. The first term in $\la\!\la \Lambda^\eps f, f \ra\!\ra_\eps$ is going to be estimated thanks to \cref{lem:coercivity}. It now remains to estimate the two crossed terms appearing in~$\la\!\la \Lambda^\eps f, f \ra\!\ra_\eps$ (see the definition~\eqref{def:Nt}). This is the purpose of the two following lemmas.

\begin{lem} \label{lem:A}
There holds:
	\begin{equation} \label{eq:A}
	\eps \langle  \nabla_{\!x} (\id-\Delta_x)^{-1} \Re[\Lambda^\eps f], \Je[f] \rangle_{L^2_x(\Omega)} 
	\lesssim\|f^\perp\|_\HH^2 \,.
	\end{equation}
\end{lem}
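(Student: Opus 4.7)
Set $u := (\id-\Delta_x)^{-1}\Re[\Lambda^\eps f]$ via~\eqref{eq:var} with $\eta_1=\Re[\Lambda^\eps f]$ and $\eta_2=0$; note that $\Re[\Lambda^\eps f]\in L^2(\Omega)$ by Cauchy--Schwarz whenever $\Lambda^\eps f\in\HH$. The strategy is to obtain an $\eps$-weighted bound on $\|u\|_{H^1(\Omega)}$ via the variational formulation, and then to conclude by Cauchy--Schwarz using~\eqref{eq:tildej2}.

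Testing~\eqref{eq:var} with $w=u$, expanding $\Lambda^\eps f=-\eps^{1-2s}v\cdot\nabla_{\!x}f+\eps^{-2s}Lf$, using $Lf=Lf^\perp$ (as $L$ vanishes on $\mathrm{Span}\,M$) together with $\int v\,(\rho[f]M)\langle\eps v\rangle^{-2}\,\d v=0$ by radial symmetry of $M$ (so that $\Je[f]=\Je[f^\perp]$, cf.~\eqref{eq:tildej}), and integrating by parts in $x$, one obtains
\[
\|u\|_{H^1(\Omega)}^2 = \eps^{1-2s}\!\int_\Omega\!\nabla_{\!x}u\cdot\Je[f^\perp]\,\d x + \eps^{-2s}\!\int_\Omega\! u\,\Re[Lf^\perp]\,\d x - \eps^{1-2s}\!\int_{\partial\Omega}\! u\,\mathcal{B}(x)\,\d\sigma_{\!x},
\]
where $\mathcal{B}(x):=\int_{\R^d}\gamma f(x,v)\,n(x)\cdot v\,\langle\eps v\rangle^{-2}\,\d v$. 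Since the weight $\phi(v):=\langle\eps v\rangle^{-2}$ is radial (hence $\phi\circ R_x=\phi$), \cref{lem:boundary} collapses to $\mathcal{B}(x)=\alpha(x)\int_{\Sigma^x_+}D^\perp f_+\,n\cdot v\,\langle\eps v\rangle^{-2}\,\d v$; a Cauchy--Schwarz in $v$ with the weight $M^{-1}n\cdot v\,\d v$ then yields $|\mathcal{B}(x)|\lesssim\sqrt{\alpha(x)}\,\|D^\perp f_+(x,\cdot)\|_{L^2(\Sigma^x_+;\,M^{-1}n\cdot v\,\d v)}$, the prefactor being uniform in $\eps$ under hypothesis~\eqref{hyp:accomodation}.

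The crucial auxiliary estimate is $\|\Re[Lf^\perp]\|_{L^2_x(\Omega)}\lesssim\eps^s\|f^\perp\|_\HH$. It follows from $\int Lf^\perp\,\d v=0$ via the identity $\langle\eps v\rangle^{-2}=1-\eps^2|v|^2\langle\eps v\rangle^{-2}$, giving $\Re[Lf^\perp]=-\int Lf^\perp\,\eps^2|v|^2\langle\eps v\rangle^{-2}\,\d v$; then Cauchy--Schwarz and the rescaling $\eta=\eps v$ (as in the proof of~\eqref{eq:tilderho2}) extract the $\eps^{2s}$ factor. For $L=L_1$, the boundedness of $L$ on $\HH$ closes the estimate; for $L=L_2$ (unbounded), one first integrates by parts in $v$ to transfer $(-\Delta_v)^s$ and $\nabla_{\!v}$ onto the smooth weight, whose rescaling then produces the desired $\eps^{2s}$ and $\eps^2$ factors. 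Combining these three ingredients with the trace inequality $\|u\|_{L^2(\partial\Omega)}\lesssim\|u\|_{H^1(\Omega)}$ and~\eqref{eq:tildej2}, each term in the identity above is at most $\|u\|_{H^1}$ times a small factor, leading to
\[
\|u\|_{H^1(\Omega)}\lesssim\eps^{-s}\|f^\perp\|_\HH+\eps^{1-2s}\bigl\|\sqrt{\alpha}\,D^\perp f_+\bigr\|_{\partial\HH_+}.
\]

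By Cauchy--Schwarz and~\eqref{eq:tildej2}, $\eps|\langle\nabla_{\!x}u,\Je[f]\rangle_{L^2_x}|\leq\eps\|\nabla_{\!x}u\|_{L^2}\|\Je[f^\perp]\|_{L^2}\lesssim\eps^s\|u\|_{H^1}\|f^\perp\|_\HH$; substituting the bound above yields $\|f^\perp\|_\HH^2$ together with a residual $\eps^{1-s}\|\sqrt{\alpha}\,D^\perp f_+\|_{\partial\HH_+}\|f^\perp\|_\HH$, which Young's inequality absorbs into $\|f^\perp\|_\HH^2+\eps^{2-2s}\|\sqrt\alpha\,D^\perp f_+\|_{\partial\HH_+}^2$; the latter is in turn controlled by the boundary coercivity~\eqref{eq:coercivity2} at the level of \cref{theo:main} (as $\eps^{2-2s}\leq\eps^{1-2s}$ when $\eps\leq1$). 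The main technical obstacle is establishing the $\eps^s$-gain on $\|\Re[Lf^\perp]\|_{L^2_x}$ for the L\'evy--Fokker--Planck operator, where the unboundedness of $L$ on $\HH$ forces the integration by parts onto the smooth weight together with a careful scaling analysis; the boundary analysis is simpler but crucially depends on the radial symmetry of $\langle\eps v\rangle^{-2}$, which eliminates two of the three terms in~\cref{lem:boundary}.
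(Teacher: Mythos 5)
Your proof takes a genuinely different route from the paper, and the difference is not innocuous: it yields a \emph{weaker} conclusion than the lemma states.

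The paper never tests the variational problem with $w=u$ and never integrates by parts in $x$. Instead, it writes $\Re[\Lambda^\eps f] = -\eps^{1-2s}\operatorname{div}_x\Je[f] + \eps^{-2s}\Re[Lf]$ and splits the inner product as $A_1+A_2$, where
\[
A_1 = -\eps^{1-2s}\big\langle \nabla_{\!x}(\id-\Delta_x)^{-1}\operatorname{div}_x\Je[f],\,\Je[f]\big\rangle_{L^2_x(\Omega)}.
\]
The divergence here is read as a source of the form $\operatorname{div}_x\eta_2$ with $\eta_2 = \Je[f]$, so \eqref{eq:boundu} gives $\|\nabla_{\!x}(\id-\Delta_x)^{-1}\operatorname{div}_x\Je[f]\|_{L^2}\lesssim\|\Je[f]\|_{L^2}$ \emph{directly}: no integration by parts, hence no boundary term. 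Combined with \eqref{eq:tildej2} this yields $A_1\lesssim\eps^{-1}\|f^\perp\|_{\HH}^2$, and $A_2$ is handled as you do (via $\|\Re[Lf]\|_{L^2_x}\lesssim\eps^s\|f^\perp\|_{\HH}$ and \eqref{eq:tildej2}). The conclusion $\eps(A_1+A_2)\lesssim\|f^\perp\|_{\HH}^2$ follows cleanly.

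By contrast, you insist on the $\eta_1$-only formulation (so $u$ is defined with $\eta_1=\Re[\Lambda^\eps f]$, $\eta_2=0$, hence homogeneous Neumann data), test with $w=u$, and integrate $\operatorname{div}_x\Je[f]$ by parts. This generates the boundary flux $\Je[f]\cdot n = \mathcal{B}$, which you control via \cref{lem:boundary}; the net effect is a residual term $\eps^{2-2s}\|\sqrt{\alpha}\,D^\perp f_+\|_{\partial\HH_+}^2$. That residual is \emph{not} present on the right-hand side of \eqref{eq:A}. So as written, you have not proved \cref{lem:A}; you have proved a weaker inequality. You note yourself that the extra term can be absorbed by the boundary dissipation in \eqref{eq:coercivity2}, which is true and does save \cref{theo:main}, but it makes the statement of \cref{lem:A} incorrect for the object $u$ you chose.

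The subtlety worth understanding: $(\id-\Delta_x)^{-1}$ applied to $-\eps^{1-2s}\operatorname{div}_x\Je[f]$ with the $\eta_1$-only data and with the $\eta_2$-data are \emph{different} functions, since the latter carries the Neumann condition $n\cdot\nabla u = \eps^{1-2s}\Je[f]\cdot n$ while the former is homogeneous; they coincide only when $\Je[f]\cdot n$ vanishes on $\partial\Omega$, which, as your own computation of $\mathcal B$ shows, generically fails unless $\alpha\equiv0$. The paper's route silently selects the $\eta_2$ interpretation inside the estimate for $A_1$, which is what makes the lemma come out without a boundary remainder. If you keep the $\eta_1$-only interpretation throughout, the lemma must be stated with your extra term.

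Two smaller points. First, your $L_1$ argument (boundedness of $L_1$ on $\HH$ followed by \eqref{eq:tilderho2} applied to $Lf^\perp$, which is purely microscopic since $\int_{\R^d}Lf^\perp\,\d v=0$) is slicker than the paper's direct computation via \eqref{eq:bound-sigma} and the change of variable $v\mapsto\eps v$, and is correct. Second, your $L_2$ argument (transferring $(-\Delta_v)^s$ and $\nabla_v$ onto the weight, then rescaling) matches the paper's. So the only substantive deviation is the routing through integration by parts, which introduces the unnecessary boundary term.

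Bottom line: correct ingredients and all the right auxiliary estimates, but by choosing to test with $u$ rather than using \eqref{eq:boundu} with $\eta_2=\Je[f]$ on the divergence piece, you end up proving a strictly weaker inequality than \eqref{eq:A}. Replace the integration by parts by the direct application of \eqref{eq:boundu} in divergence form and the boundary term disappears, recovering the lemma exactly.
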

\begin{proof}
We first notice that
	\[
	\Re[\Lambda^\eps f]
	= - \frac{1}{\eps^{2s-1}} \operatorname{div}_x \Je[f]
	+ \frac{1}{\eps^{2s}}\Re[L f] \,. 
	\]
It allows us to write that 
	\[
	\langle  \nabla_{\!x} (\id-\Delta_x)^{-1} \Re[\Lambda^\eps f], \Je[f] \rangle_{L^2_x(\Omega)} = A_1+A_2\,,
	\]
with 
	\begin{align*}
	A_1&\coloneqq -\frac{1}{\eps^{2s-1}} \left\langle \nabla_{\!x} (\id-\Delta_x)^{-1} \operatorname{div}_x \Je[f] , \Je[f] \right\rangle_{L^2_x(\Omega)}\,,\\
	A_2 &\coloneqq \frac{1}{\eps^{2s}}\left\langle \nabla_{\!x} (\id-\Delta_x)^{-1} \Re[L f],  \Je[f] \right\rangle_{L^2_x(\Omega)}\,. 
	\end{align*}
Thanks to the Cauchy--Schwarz inequality, \eqref{eq:boundu} and \eqref{eq:tildej2}, we have 
	\begin{align*}
	A_1 &\leq \frac{1}{\eps^{2s-1}} \left\|\nabla_{\!x} (\id-\Delta_x)^{-1} \operatorname{div}_x \Je[f]\right\|_{L^2_x(\Omega)} \|\Je[f]\|_{L^2_x(\Omega)} \\
	&\lesssim \frac{1}{\eps^{2s-1}}  \left\|\Je[f]\right\|_{L^2_x(\Omega)}^2 \\
	&\lesssim \frac{1}{\eps}  \|f^\perp\|_\HH^2\,.
	\end{align*}
In a similar fashion, using \eqref{eq:tildej2} we have
	\[
	A_2 \lesssim \frac{1}{\eps^{s+1}}\|\Re[Lf]\|_{L^2_x(\Omega)} \|f^\perp\|_\HH \,. 
	\]
We then need to examine in turn the cases $L=L_1$ and $L=L_2$. For $L=L_1$, we first remark that 
	\begin{align*}
	\Re[Lf] (x)
	&= \Re [L f^\perp] (x) \\
	&=- \int_{\R^d \times \R^d} \sigma(x,v,v') \left[f^\perp(x,v') M(v) - f^\perp (x,v) M(v') \right] \, \d v' \, \frac{\eps^2 |v|^2}{\langle \eps v \rangle^2} \, \d v 
	\end{align*}
where we used the fact that $\int_{\R^d} Lf^\perp = 0$ since $\sigma$ is symmetric in $(v,v')$. From this and~\eqref{eq:bound-sigma}, we deduce that 
	\begin{align*}
	\|\Re[Lf]\|_{L^2_x(\Omega)} 
	&\lesssim \left\| \int_{\R^d} |f^\perp| \, \d v \right\|_{L^2_x(\Omega)} \int_{\R^d} M(v) \frac{\eps^2|v|^2}{\langle \eps v \rangle^2 } \, \d v 
	+ \left\| \int_{\R^d} |f^\perp| \frac{\eps^2|v|^2}{\langle \eps v \rangle^2} \, \d v\right\|_{L^2_x(\Omega)} \\
	&\leq \|f^\perp\|_\HH \int_{\R^d} M(v) \frac{\eps^2|v|^2}{\langle \eps v \rangle^2 } \, \d v 
	+ \|f^\perp\|_\HH \left(\int_{\R^d} M(v) \frac{\eps^4|v|^4}{\langle \eps v \rangle^4 } \, \d v \right)^{\frac12}\,, 
	\end{align*}
where we used the Cauchy--Schwarz inequality in velocity to get the last estimate. Finally, using the change of variable $v \to \eps v$ gives us that 
	\begin{equation}\label{estimL}
	\|\Re[Lf]\|_{L^2_x(\Omega)} \lesssim (\eps^{2s} + \eps^s) \|f^\perp\|_\HH
	\lesssim \eps^s \|f^\perp\|_\HH\,. 
	\end{equation}
In the end, we obtain that 
	\begin{equation} \label{eq:A2}
	A_2 \lesssim \frac1\eps \|f^\perp\|^2_\HH\,. 
	\end{equation}
For $L=L_2$, one has
	\begin{align*}
	\Re[Lf] (x)
	&= \Re [L f^\perp] (x) \\
	&=-\nu(x)\int_{\R^d} \left(\nabla_v\cdot(vf^\perp) -(-\Delta_v)^{s}f^\perp\right)\frac{|\eps v|^2}{\langle \eps v\rangle^{2}}\,\d v\,.
	\end{align*}
Using the adjoint operator one obtains 
	\begin{align*}
	\|\Re[Lf]\|_{L^2_x(\Omega)}^2&
	=\int_\Omega\nu^2(x)\left(\int_{\R^d} f^\perp\left( g(\eps v) + \eps^{2s} h(\eps v)\right)\right)^2 \,\d x
	\end{align*}
where
	\[
	g(w) \coloneqq w\cdot \nabla_w\left(\frac{|w|^2}{\langle w\rangle^2}\right)
	\quad\text{ and }\quad
	h(w) \coloneqq (-\Delta_w)^s\left(\frac{|w|^2}{\langle w\rangle^2}\right).
	\]
Notice that $\langle w\rangle^{-2} = 1-\frac{|w|^2}{\langle w\rangle^{2}}$, thus we have
	\[
	|g(w)|\lesssim \frac{|w|^2}{1+|w|^4}
	\]
and
	\begin{equation}\label{eq:estimh}\begin{aligned}
	\left|h(w)\right|
	&\leq \int_{\R^d} \frac{\left|2\langle w\rangle^{-2} -\langle w+\bar w\rangle^{-2}-\langle w-\bar w\rangle^{-2}\right|}{|\bar w|^{d+2s}}\,\d \bar w\\
	&\lesssim \int_{|\bar w|\leq1} \frac{|\bar w|^2}{|\bar w|^{d+2s}}\,\d \bar w + \int_{|\bar w|>1} \frac{1}{|\bar w|^{d+2s}}\,\d \bar w\lesssim 1\,.
	\end{aligned}\end{equation}
Therefore, using the change of variable $v\to \eps v$, we find
\begin{align*}
\|\Re[Lf]\|_{L^2_x(\Omega)}^2
&\lesssim \|{f^\perp}\|_{\mathcal{H}}^2 \int_{\R^d} M(v) \left(g(\eps v)^2+\eps^{4s} h(\eps v)^2\right) \,\d v\\
&\lesssim \|{f^\perp}\|_{\mathcal{H}}^2 \left(\eps^{2s}\int_{\R^d} \frac{|v|^2}{(\eps^2+|v|^2)^{\frac{d+2s}{2}} (1+|v|^4)} \,\d v+\eps^{4s}\right)\\
&\lesssim \eps^{2s}\|{f^\perp}\|_{\mathcal{H}}^2\,.
\end{align*}
Thus, we obtain \eqref{estimL} for  $L=L_2$ as well as \eqref{eq:A2}.
We conclude the proof by gathering the estimates on $A_1$ and $A_2$.
\end{proof}

\begin{lem} \label{lem:B}
There exist a positive constant $\kappa>0$ such that for any $\eps \in (0,1]$, there holds:
	\begin{multline} \label{eq:B}
	\eps \langle  \nabla_{\!x} (\id-\Delta_x)^{-1} \Re[f], \Je[\Lambda^\eps f] \rangle_{L^2_x(\Omega)}  \\
	\leq
	- \kappa \|\rho[f]\|^2_{L^2_x(\Omega)} + \frac{C}{\eps^{2s}} \|f^\perp\|^2_\HH + C \eps^{1-2s}\left\|\sqrt{\alpha(2-\alpha)} D^\perp f_+\right\|_{\partial \HH_+}^2.  
	\end{multline}
\end{lem}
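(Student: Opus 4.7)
The plan is to split $J^\eps[\Lambda^\eps f] = -\eps^{1-2s} J^\eps[v\cdot\nabla_{\!x} f] + \eps^{-2s} J^\eps[L f^\perp]$ (using $L\pi f = 0$) and to exploit the identity
\[
J^\eps[v\cdot \nabla_{\!x} f] = \operatorname{div}_x T, \qquad T_{ij}(x) \coloneqq \int_{\R^d} v_i v_j f(x,v)\,\frac{\d v}{\langle \eps v\rangle^2}.
\]
The tensor $T$ decomposes as $T = \theta^\eps \rho[f] I + T^\perp$, where $\theta^\eps \coloneqq d^{-1}\int |v|^2 M\langle \eps v\rangle^{-2}\,\d v$ and $T^\perp_{ij} \coloneqq \int v_i v_j f^\perp\langle \eps v\rangle^{-2}\,\d v$. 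A change of variable $v = w/\eps$ together with the bounds on $M$ shows that both $c_\eps$ and $\eps^{2-2s}\theta^\eps$ are bounded above and below by positive constants uniformly in $\eps \in (0,1]$ (for $s \in (0,1)$), which is the natural scaling underlying the uniform-in-$\eps$ argument.

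Setting $u \coloneqq (\id - \Delta_x)^{-1} \Re[f]$, I would integrate by parts the interior pairing $\la \nabla_{\!x} u, \operatorname{div}_x T\ra_{L^2_x}$ (using the Neumann condition $n\cdot \nabla_{\!x} u|_{\partial\Omega} = 0$ to postpone the boundary integral) and substitute $\Delta_x u = u - \Re[f]$ together with $\Re[f] = c_\eps \rho[f] + \Re[f^\perp]$ to isolate the main coercive term $-\eps^{2-2s}\theta^\eps c_\eps \|\rho[f]\|^2_{L^2_x(\Omega)}$, modulo three remainders: $+\eps^{2-2s}\theta^\eps\la\rho[f], u\ra$, $-\eps^{2-2s}\theta^\eps\la\rho[f], \Re[f^\perp]\ra$, and $\eps^{2-2s}\la \nabla^2_x u, T^\perp\ra$. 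To prevent the potentially dangerous $\la\rho[f], u\ra$ term from cancelling the main term, I would further decompose $u = c_\eps (\id - \Delta_x)^{-1}\rho[f] + (\id - \Delta_x)^{-1}\Re[f^\perp]$; since $\langle \rho[f]\rangle = 0$ by the vanishing global mass assumption, \eqref{eq:boundu1} applied to the first piece bounds it by $c_\eps (1+\lambda_1)^{-1}\|\rho[f]\|^2$, strictly smaller than $c_\eps\|\rho[f]\|^2$, which leaves a residual $-\frac{\lambda_1}{1+\lambda_1}\eps^{2-2s}\theta^\eps c_\eps \|\rho[f]\|^2$ with a uniform positive constant. The second piece and the other two remainders are controlled by Cauchy--Schwarz together with \eqref{eq:traceu2}, \eqref{eq:tilderho2}, \eqref{eq:order2} and Young's inequality, yielding contributions $\leq \delta \|\rho[f]\|^2 + C\eps^{-2s}\|f^\perp\|^2_\HH$.

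The main obstacle is the boundary contribution $\int_{\partial\Omega}\int_{\R^d} (v\cdot \nabla_{\!x} u)(n\cdot v) f \langle \eps v\rangle^{-2}\,\d v\,\d\sigma_x$. I would apply \cref{lem:boundary} with $\phi(v) \coloneqq (v\cdot \nabla_{\!x} u(x))/\langle \eps v\rangle^2$, viewed pointwise in $x \in \partial\Omega$. The crucial observation is that the Neumann condition $n(x)\cdot \nabla_{\!x} u(x) = 0$ implies $R_x v \cdot \nabla_{\!x} u = v\cdot \nabla_{\!x} u$, so $\phi(v) = \phi(R_x v)$ on $\partial\Omega$, and the two terms of \cref{lem:boundary} proportional to $\phi(v) - \phi(R_x v)$ vanish identically; only the $\alpha D^\perp f_+$ contribution survives. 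I would bound the latter by Cauchy--Schwarz on $\partial\HH_+$ after writing $\alpha D^\perp f_+ = \sqrt{\alpha/(2-\alpha)}\cdot\sqrt{\alpha(2-\alpha)}D^\perp f_+$, using the trace-elliptic estimate $\|\nabla_{\!x} u\|_{L^2(\partial\Omega)} \lesssim \|\Re[f]\|_{L^2(\Omega)} \lesssim \|\rho[f]\| + \eps^s \|f^\perp\|_\HH$ coming from \eqref{eq:traceu2} combined with \eqref{eq:tilderho2}, and a velocity-moment computation analogous to \eqref{estimL} (scaling $v\to v/\eps$); Young's inequality then produces a bound of the required form $\delta \|\rho[f]\|^2 + C\eps^{-2s}\|f^\perp\|^2_\HH + C\eps^{1-2s}\|\sqrt{\alpha(2-\alpha)}D^\perp f_+\|^2_{\partial\HH_+}$.

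Finally, the collision contribution $\eps^{1-2s}\la \nabla_{\!x} u, J^\eps[L f^\perp]\ra_{L^2_x}$ is handled as in \cref{lem:A}: a pointwise velocity Cauchy--Schwarz together with the moment estimate $\int |v|^2 M\langle \eps v\rangle^{-4}\,\d v \lesssim \eps^{2s-2}$ and the $\HH$-boundedness of $L$ yield $\|J^\eps[Lf^\perp]\|_{L^2_x}\lesssim \eps^{s-1}\|f^\perp\|_\HH$, so another application of $\|\nabla_{\!x} u\|_{L^2}\lesssim \|\rho[f]\| + \eps^s\|f^\perp\|_\HH$ and Young's inequality contributes $\leq \delta \|\rho[f]\|^2 + C\eps^{-2s}\|f^\perp\|^2_\HH$. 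Summing all pieces and choosing $\delta>0$ sufficiently small yields \eqref{eq:B} with $\kappa > 0$ independent of $\eps$.
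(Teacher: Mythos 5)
Your proposal follows essentially the same route as the paper's proof: the same splitting of $\Je[\Lambda^\eps f]$ into transport and collision parts, the same tensor decomposition $T=\theta^\eps\rho[f]\Id+T^\perp$ (the paper's \eqref{eq:decomp-moment2}), the same substitution $\Delta_x u = u - \Re[f]$ together with $\Re[f]=c_\eps\rho[f]+\Re[f^\perp]$, the same use of \eqref{eq:boundu1} (valid because $\langle\rho[f]\rangle=0$) to extract a uniform spectral gap $\frac{\lambda_1}{1+\lambda_1}\|\rho[f]\|^2_{L^2_x}$, and the same application of \cref{lem:boundary} combined with the homogeneous Neumann boundary condition for $u$. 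Your observation that $\phi(v)-\phi(R_x v)=0$ on $\partial\Omega$ because $n(x)\cdot\nabla_{\!x}u=0$ there is the same fact the paper uses, stated a bit more directly; the order in which you decompose $T$ and $\Re[f]$ differs from the paper's splitting into $B_{11},B_{12}$, but the resulting algebra and the list of remainders to control are the same.

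One slip worth flagging: in the collision contribution you invoke ``the $\HH$-boundedness of $L$'' to deduce $\|\Je[Lf^\perp]\|_{L^2_x}\lesssim\eps^{s-1}\|f^\perp\|_\HH$. That Cauchy--Schwarz-plus-boundedness chain is correct only for the linear Boltzmann operator $L_1$. The L\'evy--Fokker--Planck operator $L_2$ from \eqref{def:LFP} is \emph{not} bounded on $\HH$: one cannot bound $\|L_2 f^\perp\|_\HH$ by $\|f^\perp\|_\HH$. The paper instead moves the derivatives onto the explicit, bounded weight $v\langle\eps v\rangle^{-2}$ via the adjoint of $L_2$, producing $\int f^\perp(\eps^{-1}g(\eps v)+\eps^{2s-1}h(\eps v))\,\d v$ with $g,h$ bounded, and only then applies Cauchy--Schwarz; this is also what is done in the proof of \cref{lem:A}. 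Your parenthetical ``handled as in \cref{lem:A}'' suggests you have this in mind, but the sentence as written fails for $L_2$ and needs that extra adjoint/integration-by-parts step to be complete.
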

\begin{proof}
We first write
	\[
	\Je[\Lambda^\eps f] 
	= - \frac{1}{\eps^{2s-1}} \operatorname{Div}_x \int_{\R^d} \frac{v \otimes v}{\langle \eps v \rangle^2} f \, \d v 
	+ \frac{1}{\eps^{2s}} \Je [Lf]
	\]
so that 
	\[
	\langle \nabla_{\!x} (\id-\Delta_x)^{-1} \Re[f], \Je[\Lambda^\eps f] \rangle_{L^2_x(\Omega)} = B_1+B_2 
	\]
with 
	\begin{multline} \label{def:B1B2}
	B_1 \coloneqq - \frac{1}{\eps^{2s-1}} \left\langle \nabla_{\!x} (\id-\Delta_x)^{-1} \Re [f], \operatorname{Div}_x \int_{\R^d} \frac{v \otimes v}{\langle \eps v \rangle^2} f \, \d v \right\rangle_{L^2_x(\Omega)} \quad \text{and} \\
	B_2 \coloneqq \frac{1}{\eps^{2s}} \left\langle \nabla_{\!x} (\id-\Delta_x)^{-1} \Re[f],\Je[L f] \right\rangle_{L^2_x(\Omega)}\,. 
	\end{multline}
We first estimate $B_1$. From~\eqref{eq:tilderho}, we have that 
  \begin{align*}
    B_1 &= - \frac{c_\eps}{\eps^{2s-1}} \left\langle \partial_{x_i}(\id-\Delta_x)^{-1}
     \rho[f] , \partial_{x_j} \int_{\R^d} v_i v_j f \frac{\d v}{\langle \eps v \rangle^2} \right\rangle_{L^2_x(\Omega)} \\
     & \quad 
     - \frac{1}{\eps^{2s-1}} \left\langle \partial_{x_i} (\id-\Delta_x)^{-1} \Re[f^\perp], \partial_{x_j} \int_{\R^d} v_i v_j f \frac{\d v}{\langle \eps v \rangle^2} \right\rangle_{L^2_x(\Omega)}\\
     &
     \eqqcolon B_{11} + B_{12}
	\end{align*}
where we used the convention of summation of repeated indices. We again use the micro-macro decomposition of $f$ to split the right-hand side into two parts: 
	\begin{equation} \label{eq:decomp-moment2}
	\int_{\R^d} \frac{v_iv_j}{\langle \eps v \rangle^2} f \, \d v =  \int_{\R^d}  \frac{v_iv_j}{\langle \eps v \rangle^2} M \, \d v \, \rho[f] +  \int_{\R^d} \frac{v_iv_j}{\langle \eps v \rangle^2} f^\perp \, \d v\,.
	\end{equation}
Performing the change of variable~~$v \to \eps v$, we obtain, using the definition~\eqref{def:M1} if $M=M_1$ and the bound~\eqref{eq:bound-M2} if $M=M_2$
   	\begin{equation} \label{eq:boundorder2M}
   	\left|\int_{\R^d} v_i v_j M(v) \, \frac{\d v}{\langle \eps v \rangle^2} \right|
   	\lesssim  \eps^{2s-2} \int_{\R^d} \frac{|v|^2}{(\eps^2+|v|^2)^\frac{d+2s}{2}} \frac{\d v}{\langle v \rangle^2}
	\lesssim \eps^{2s-2}\,.
   	\end{equation}
Moreover, the fact that $M$ is radially symmetric and that $ 0<\int_{\R^d} \frac{ |v|^2}{1+|v|^2} \frac{\d v}{|v|^{d+2s}}<\infty$ imply that there exists $c_2>0$ such that
	\begin{equation} \label{def:c2}
      	\int_{\R^d} |v|^2M(v) \, \frac{\d v}{\langle \eps v \rangle^2} \underset{\eps \to 0}{\sim} \eps^{2s-2} c_2 
	\qquad \text{and} \qquad  \text{if $i \neq j$,} \qquad 
	\int_{\R^d} v_i v_jM(v) \, \frac{\d v}{\langle \eps v \rangle^2} =0 \,.
    	\end{equation}
We then perform an integration by parts and use~\eqref{def:c2} to deal with the first term:
	\begin{align*}
	B_{11} 
	&= - \frac{c_\eps}{d\eps^{2s-1}} \left\langle (-\Delta_x) (\id-\Delta_x)^{-1}\rho[f], \int_{\R^d} \frac{|v|^2}{\langle \eps v \rangle^2} M(v) \, \d v \, \rho[f] \right\rangle_{L^2_x(\Omega)} \\
	&\quad
	+  \frac{c_\eps}{\eps^{2s-1}} \left\langle \partial_{x_j} \partial_{x_i} (\id-\Delta_x)^{-1}\rho[f], \int_{\R^d} v_iv_j f^\perp \, \frac{\d v}{\langle \eps v \rangle^2}  \, \right\rangle_{L^2_x(\Omega)} \\
	&\quad
	-  \frac{c_\eps}{\eps^{2s-1}}  \int_\Sigma (v\cdot n(x)) \,(\gamma f)\,(v\cdot\nabla_{\!x}) (\id-\Delta_x)^{-1} \rho[f] \, \frac{\d v}{\langle \eps v \rangle^2}  \, \d \sigma_x \\
	&\eqqcolon B_{111} + B_{112}+ B_{113}\,.
	\end{align*}
Setting $u[f]:=(\id-\Delta_x)^{-1} \rho[f]$, we have
\begin{equation}\label{eq:eqB111}
B_{111} = -\frac{c_\eps}{d\eps^{2s-1}} \left(\int_{\R^d} \frac{|v|^2}{\langle \eps v \rangle^2} M(v) \, \d v\right) 
\left\langle \rho[f]-u[f], \rho[f] \right\rangle_{L^2_x(\Omega)}.
\end{equation}
Thus, from~\eqref{eq:tilderho},~\eqref{def:c2} and \eqref{eq:boundu1} (since $\rho[f]$ is mean-free), we obtain that there exists a constant $\eta>0$ such that for any~$\eps \in (0,1]$, 
	\[
	B_{111} \leq - \eta \frac{c_2}{\eps}\frac{\lambda_1}{1+\lambda_1}\|\rho[f]\|^2_{L^2_x(\Omega)}\,,
	\]
where we recall that $\lambda_1$ is the first non-trivial eigenvalue of the Neumann Laplacian in $\Omega$.
Using now~\eqref{eq:traceu2} and~\eqref{eq:order2}, we estimate $B_{112}$ as follows:
	\[
	B_{112} \lesssim \frac{1}{\eps^{s+1}} \|\rho[f]\|_{L^2_x(\Omega)} \|f^\perp\|_\HH\,. 
	\]
To deal with~$B_{113}$, we apply \cref{lem:boundary} with $\phi(v) = v_j \langle \eps v \rangle^{-2}$ for each $j = 1, \dots, d$ which is such that 
	\[
	\phi(v) - \phi(R_x v) = \frac{2 n_j(x) (n(x) \cdot v)}{\langle \eps v \rangle^2}\,.
	\]
It gives 
	\[
	B_{113} =-  \frac{c_\eps}{\eps^{2s-1}}  \int_{\Sigma_+} \alpha(x) \nabla_{\!x} (\id-\Delta_x)^{-1} \rho[f]\cdot v \, D^\perp f_+ \, n(x) \cdot v \,\frac{\d v}{\langle \eps v \rangle^2} \, \d \sigma_x
	\] 
using the Neumann boundary condition which has been used to define~$(\id-\Delta_x)^{-1} \rho[f]$ in \cref{subsec:elliptic}. Using now the bound~\eqref{eq:traceu2} and the Cauchy--Schwarz inequality in velocity, we obtain 
	\begin{align*}
	B_{113} &\lesssim \frac{1}{\eps^{2s-1}} \left(\int_{\R^d}\frac{|v|^3}{\langle\eps v\rangle^4}M(v)\, \d v\right)^{1/2}\|\rho[f]\|_{L^2_x(\Omega)} \|\alpha D^\perp f_+\|_{\partial \HH_+}\\
	&\lesssim \frac{1}{\eps^{s+\frac12}}\|\rho[f]\|_{L^2_x(\Omega)} \left\|\sqrt{\alpha(2-\alpha)} D^\perp f_+\right\|_{\partial \HH_+}.
	\end{align*}
We now handle the term $B_{12}$. An integration by parts and~\eqref{eq:decomp-moment2} give 
	\begin{align*}
	B_{12} &= \frac{1}{\eps^{2s-1}} \left\langle \partial_{x_j} \partial_{x_i} (\id-\Delta_x)^{-1}\Re[f^\perp] , \rho[f] \int_{\R^d} v_iv_j M(v) \frac{\d v}{\langle \eps v \rangle^2} \right\rangle_{L^2_x(\Omega)} \\
	&\quad
	+ \frac{1}{\eps^{2s-1}} \left\langle \partial_{x_j} \partial_{x_i} (\id-\Delta_x)^{-1} \Re[f^\perp] , \int_{\R^d} v_iv_j f^\perp \frac{\d v}{\langle \eps v \rangle^2} \right\rangle_{L^2_x(\Omega)} \\
	&\quad
	- \frac{1}{\eps^{2s-1}} \int_\Sigma \nabla_{\!x} (\id-\Delta_x)^{-1} \Re[f^\perp] \cdot v \, (\gamma f) \, n(x) \cdot v  \, \frac{\d v}{\langle \eps v \rangle^2}  \, \d \sigma_x \\
	&\eqqcolon B_{121} + B_{122}+B_{123}\,. 
	\end{align*}
To estimate $B_{121}$, we use~\eqref{eq:traceu2}, \eqref{eq:tilderho2} and~\eqref{eq:boundorder2M}. We get 
	\[
	B_{121} \lesssim \frac{1}{\eps^{1-s}} \|\rho[f]\|_{L^2_x(\Omega)} \|f^\perp\|_\HH\,. 
	\]
Similarly, using~\eqref{eq:traceu2}, \eqref{eq:tilderho2} and~\eqref{eq:order2}, we obtain 
	\[
	B_{122} \lesssim \frac1\eps \|f^\perp\|^2_\HH\,. 
	\]
Finally, $B_{123}$ is treated similarly as~$B_{113}$ and using~\eqref{eq:tilderho2}:
	\begin{align*}
	B_{123}
	&\lesssim 
	\frac{1}{\eps^{s+\frac12}}\|\Re[f^\perp]\|_{L^2_x(\Omega)} \left\|\sqrt{\alpha(2-\alpha)} D^\perp f_+\right\|_{\partial \HH_+} \\
	&\lesssim 
	\frac{1}{\sqrt{\eps}} \|f^\perp\|_\HH \left\|\sqrt{\alpha(2-\alpha)} D^\perp f_+\right\|_{\partial \HH_+}. 
	\end{align*}
Gathering all the previous estimates and keeping only the dominant terms, we obtain that there exists $C>0$ such that 
	\begin{multline*}
	\eps B_1 \leq
	- \eta c_2 \frac{\lambda_1}{1+\lambda_1}\|\rho[f]\|^2_{L^2_x(\Omega)}
	+ \frac{C}{\eps^{s}} \|\rho[f]\|_{L^2_x(\Omega)} \|f^\perp\|_\HH + C\|f^\perp\|^2_\HH
	\\
	+ C \sqrt{\eps} \|f^\perp\|_\HH \left\|\sqrt{\alpha(2-\alpha)} D^\perp f_+\right\|_{\partial \HH_+}
	+ C\eps^{\frac12-s} \|\rho[f]\|_{L^2_x(\Omega)}  \left\|\sqrt{\alpha(2-\alpha)} D^\perp f_+\right\|_{\partial \HH_+}.
	\end{multline*}
Using now Young inequality and keeping again only the dominant terms, we get
	\begin{equation} \label{eq:B1}
	\eps B_1 \leq - \frac{\eta c_2}{2} \frac{\lambda_1}{1+\lambda_1}\|\rho[f]\|^2_{L^2_x(\Omega)} + \frac{C}{\eps^{2s}} \|f^\perp\|^2_\HH + C\eps^{1-2s} \left\|\sqrt{\alpha(2-\alpha)} D^\perp f_+\right\|_{\partial \HH_+}^2. 
	\end{equation}
We now come to the estimate of $B_2$ introduced in~\eqref{def:B1B2}. We first decompose it into two parts using~\eqref{eq:tilderho}:
	\begin{align*}
	B_2 
	&= \frac{c_\eps}{\eps^{2s}} \left\langle \nabla_{\!x} (\id-\Delta_x)^{-1} \rho[f],\Je[L f] \right\rangle_{L^2_x(\Omega)} + \frac{1}{\eps^{2s}} \left\langle \nabla_{\!x} (\id-\Delta_x)^{-1} \Re[f^\perp],\Je[L f] \right\rangle_{L^2_x(\Omega)} \\
	&\eqqcolon B_{21} + B_{22}\,. 
	\end{align*}
Before estimating $B_{21}$ and $B_{22}$, we first bound $\Je[Lf] = \Je [Lf^\perp]$ in $L^2_x(\Omega)$. When $L=L_1$ (see~\eqref{def:L1}), using~\eqref{eq:bound-sigma}, we get
		\begin{align*}
	\|\Je[Lf]\|_{L^2_x(\Omega)} 
	&\lesssim \left\| \int_{\R^d} |f^\perp| \, \d v \right\|_{L^2_x(\Omega)} \int_{\R^d} M(v) \frac{|v|}{\langle \eps v \rangle^2 } \, \d v 
	+ \left\| \int_{\R^d} |f^\perp| \frac{|v|}{\langle \eps v \rangle^2} \, \d v\right\|_{L^2_x(\Omega)} \\
	&\lesssim \|f^\perp\|_\HH \int_{\R^d} M(v) \frac{|v|}{\langle \eps v \rangle^2 } \, \d v 
	+ \|f^\perp\|_\HH \left(\int_{\R^d} M(v) \frac{|v|^2}{\langle \eps v \rangle^4 } \, \d v \right)^{\frac12}\,. 
	\end{align*}
In the first term of the right-hand side, we remark that performing again the change of variable~~$v \to \eps v$ and using the fact that $(\eps^2+|v|^2)^{\frac{d+2s}{2}} \geq \eps^s |v|^{d+s}$, we have 
    	\[
    	\int_{\R^d} M(v) \frac{|v|}{\langle \eps v \rangle^2} \, \d v 
    	= \eps^{2s-1} \int_{\R^d} \frac{1}{(\eps^2 + |v|^2)^\frac{d+2s}{2}} \frac{|v|}{\langle v \rangle^2} \, \d v
    	\leq \eps^{s-1} \int_{\R^d} \frac{1}{|v|^{d+s-1}} \frac{\d v}{\langle v \rangle^2} 
    	\lesssim \eps^{s-1}\,.
    	\]
To estimate the second term, we just perform the change of variable $v \to \eps v$. We finally get that 
	\begin{equation}\label{eq:estimJeL}
	\|\Je[Lf]\|_{L^2_x(\Omega)} 
	\lesssim \eps^{s-1} \|f^\perp\|_\HH\,.
	\end{equation}
When $L=L_2$ (see \eqref{def:LFP}), using the adjoint operator we have
	\begin{align*}
	\|\Je[Lf]\|_{L^2_x(\Omega)}^2&
	=\int_\Omega\nu^2(x)\left(\int_{\R^d} f^\perp\left( \eps^{-1}g(\eps v) + \eps^{2s-1} h(\eps v)\right)\right)^2 \,\d x
	\end{align*}
where
	\[
	g(w) \coloneqq  \left(\frac{1-|w|^2}{(1+|w|^2)^2}\right)w
	\quad\text{ and }\quad
	h(w) \coloneqq (-\Delta_w)^s\left(\frac{w}{\langle w\rangle^2}\right)\,.
	\]
Notice that 
	\[
	|g(w)|\lesssim \frac{|w|}{1+|w|^2}\,.
	\]
and, proceeding as in \eqref{eq:estimh},
	\[
	|h(w)|\lesssim 1\,.
	\]
Thus, using the change of variable $v\to \eps v$, we find
	\begin{align*}
	\|\Je[Lf]\|_{L^2_x(\Omega)}^2
	&\lesssim \|{f^\perp}\|_{\mathcal{H}}^2 \int_{\R^d} M(v) \left(\eps^{-2}g(\eps v)^2+\eps^{4s-2} h(\eps v)^2\right) \,\d v\\
	&\lesssim \|{f^\perp}\|_{\mathcal{H}}^2 \left(\eps^{2s-2}\int_{\R^d} \frac{|v|^2}{(\eps^2+|v|^2)^{\frac{d+2s}{2}} (1+|v|^2)^2} \,\d v+\eps^{4s-2}\right)\\
	&\lesssim \eps^{2s-2}\|{f^\perp}\|_{\mathcal{H}}^2\,,
	\end{align*}
which gives \eqref{eq:estimJeL} for $L=L_2$ as well.
Combining this with~\eqref{eq:traceu2}, we have by the Cauchy--Schwarz inequality in~$x$:
	\[
	B_{21} \lesssim \frac{1}{\eps^{s+1}} \|\rho[f]\|_{L^2_x(\Omega)} \|f^\perp\|_\HH\,. 
	\]
Similarly, using also~\eqref{eq:tilderho2}, we obtain 
	\[
	B_{22} \lesssim \frac1\eps \|f^\perp\|^2_\HH\,,
	\]
which yields 
	\[
	\eps B_2 \lesssim \frac{1}{\eps^s} \|\rho[f]\|_{L^2_x(\Omega)} \|f^\perp\|_\HH + \|f^\perp\|^2_\HH\,.
	\]
Combining this with~\eqref{eq:B1} and using once more Young inequality, we obtain the wanted result~\eqref{eq:B}. 
\end{proof}

\medskip
\noindent {\it End of the proof of \cref{theo:main}.} 
We are now able to complete the proof of \cref{theo:main} by gathering the estimates coming from \cref{lem:coercivity,lem:A,lem:B}. Keeping only the dominant terms, these results imply that 
	\begin{align*}
	\la\!\la \Lambda^\eps f, f \ra\!\ra_\eps
	&\leq - \frac{\lambda_0}{\eps^{2s}} \|f^\perp\|^2_\HH - \frac{\eps^{1-2s}}{2} \Big\|\sqrt{\alpha(2-\alpha)} D^\perp f_+\Big\|^2_{\partial \HH_+} \\
	&\quad
	-\kappa\delta \|\rho[f]\|^2_{L^2_x(\Omega)} + \frac{C \delta}{\eps^{2s}} \|f^\perp\|^2_\HH + C\delta\eps^{1-2s} \left\|\sqrt{\alpha(2-\alpha)} D^\perp f_+\right\|_{\partial \HH_+}^2\,.
	\end{align*}
To conclude, it remains to choose $\delta>0$ small enough, use the fact that 
	\[
	\|f\|^2_\HH = \|\rho[f]\|^2_{L^2_x(\Omega)} + \|f^\perp\|^2_\HH
	\]
and the equivalence of the norms $\Nt \cdot \Nt_\eps$ and $\|\cdot\|_\HH$ proven in~\cref{lem:eqnorm}. \qed

\subsection{The case of the torus} \label{sec:torus}

Similarly to what we did in \cref{sec:prelim}, we can define the operator $(\id-\Delta_x)^{-1}$ when~$\Omega$ is the torus~$\T^d$. The only difference is that $\lambda_1$ now refers to the first non-trivial eigenvalue of the Laplacian on the torus. The rest of the proof is the same, except that there is no boundary term in the proof of \cref{lem:coercivity,lem:B}. We can thus conclude in the exact same manner that the conclusion of \cref{theo:main} also holds true when $\Omega = \T^d$. 

\bigskip
\bibliographystyle{acm}
\bibliography{bibli}

\begin{thebibliography}{10}

\bibitem{AHHT1}
{\sc Ayi, N., Herda, M., Hivert, H., and Tristani, I.}
\newblock A note on hypocoercivity for kinetic equations with heavy-tailed
  equilibrium.
\newblock {\em C. R. Math. Acad. Sci. Paris 358}, 3 (2020), 333--340.

\bibitem{AHHT2}
{\sc Ayi, N., Herda, M., Hivert, H., and Tristani, I.}
\newblock On a structure-preserving numerical method for fractional
  {F}okker-{P}lanck equations.
\newblock {\em Math. Comp. 92}, 340 (2023), 635--693.

\bibitem{benabdallah2011fractional}
{\sc Ben~Abdallah, N., Mellet, A., and Puel, M.}
\newblock Fractional diffusion limit for collisional kinetic equations: a
  {Hilbert} expansion approach.
\newblock {\em Kinet. Relat. Models 4}, 4 (2011), 873--900.

\bibitem{Bernou}
{\sc Bernou, A.}
\newblock Asymptotic behavior of degenerate linear kinetic equations with
  non-isothermal boundary conditions.
\newblock {\em J. Differ. Equations 442\/} (2025), 52.
\newblock Id/No 113470.

\bibitem{BCMT}
{\sc Bernou, A., Carrapatoso, K., Mischler, S., and Tristani, I.}
\newblock Hypocoercivity for kinetic linear equations in bounded domains with
  general {M}axwell boundary condition.
\newblock {\em Ann. Inst. H. Poincar\'{e} C Anal. Non Lin\'{e}aire 40}, 2
  (2023), 287--338.

\bibitem{BDL}
{\sc Bouin, E., Dolbeault, J., and Lafleche, L.}
\newblock Fractional hypocoercivity.
\newblock {\em Comm. Math. Phys. 390}, 3 (2022), 1369--1411.

\bibitem{Carrapatoso-Mischler}
{\sc Carrapatoso, K., and Mischler, S.}
\newblock The kinetic {Fokker}-{Planck} equation in a domain:
  ultracontractivity, hypocoercivity, and long-time asymptotic behavior.
\newblock {\em Atti Accad. Naz. Lincei, Cl. Sci. Fis. Mat. Nat., IX. Ser.,
  Rend. Lincei, Mat. Appl. 35}, 4 (2024), 643--680.

\bibitem{cesbron2018anomalous}
{\sc Cesbron, L.}
\newblock Anomalous diffusion limit of kinetic equations in spatially bounded
  domains.
\newblock {\em Commun. Math. Phys. 364}, 1 (2018), 233--286.

\bibitem{cesbron2022fractional}
{\sc Cesbron, L., Mellet, A., and Puel, M.}
\newblock Fractional diffusion limit of a kinetic equation with diffusive
  boundary conditions in a bounded interval.
\newblock {\em Asymptotic Anal. 130}, 3-4 (2022), 367--386.

\bibitem{dechicha2024fractional}
{\sc Dechicha, D., and Puel, M.}
\newblock Fractional diffusion for {Fokker}-{Planck} equation with heavy tail
  equilibrium: an {\`a} la {Koch} spectral method in any dimension.
\newblock {\em Asymptotic Anal. 136}, 2 (2024), 79--132.

\bibitem{DGP}
{\sc Degond, P., Goudon, T., and Poupaud, F.}
\newblock Diffusion limit for non homogeneous and non-micro-reversible
  processes.
\newblock {\em Indiana Univ. Math. J. 49}, 3 (2000), 1175--1198.

\bibitem{DHHM}
{\sc Dietert, H., H\'{e}rau, F., Hutridurga, H., and Mouhot, C.}
\newblock Quantitative geometric control in linear kinetic theory.
\newblock {\em arXiv preprint arXiv:2209.09340}, 2023.

\bibitem{DMS1}
{\sc Dolbeault, J., Mouhot, C., and Schmeiser, C.}
\newblock Hypocoercivity for kinetic equations with linear relaxation terms.
\newblock {\em Comptes Rendus Mathematique 347}, 9-10 (2009), 511 -- 516.

\bibitem{DMS2}
{\sc Dolbeault, J., Mouhot, C., and Schmeiser, C.}
\newblock Hypocoercivity for linear kinetic equations conserving mass.
\newblock {\em Trans. Amer. Math. Soc. 367}, 6 (2015), 3807--3828.

\bibitem{Gentil-Imbert}
{\sc Gentil, I., and Imbert, C.}
\newblock The {L}\'{e}vy-{F}okker-{P}lanck equation: {$\Phi$}-entropies and
  convergence to equilibrium.
\newblock {\em Asymptot. Anal. 59}, 3-4 (2008), 125--138.

\bibitem{Herau-AA}
{\sc H\'{e}rau, F.}
\newblock Hypocoercivity and exponential time decay for the linear
  inhomogeneous relaxation {B}oltzmann equation.
\newblock {\em Asymptot. Anal. 46}, 3-4 (2006), 349--359.

\bibitem{Mellet-Indiana}
{\sc Mellet, A.}
\newblock Fractional diffusion limit for collisional kinetic equations: a
  moments method.
\newblock {\em Indiana Univ. Math. J. 59}, 4 (2010), 1333--1360.

\bibitem{mellet2011fractional}
{\sc Mellet, A., Mischler, S., and Mouhot, C.}
\newblock Fractional diffusion limit for collisional kinetic equations.
\newblock {\em Arch. Ration. Mech. Anal. 199}, 2 (2011), 493--525.

\bibitem{Mischler-CPDE}
{\sc Mischler, S.}
\newblock On the trace problem for solutions of the {V}lasov equation.
\newblock {\em Comm. Partial Differential Equations 25}, 7-8 (2000),
  1415--1443.

\bibitem{Tristani-CMS}
{\sc Tristani, I.}
\newblock Fractional {F}okker-{P}lanck equation.
\newblock {\em Commun. Math. Sci. 13}, 5 (2015), 1243--1260.

\end{thebibliography}
\end{document}